\begin{document}



\section{Introduction} 
In statistical physics, the evolution of a molecular system at a given temperature is
typically modeled by the Langevin dynamics 
\begin{equation}\label{eq:Langevin_intro qsd}
  \left\{
    \begin{aligned}
        &\mathrm{d}q_t=M^{-1} p_t \mathrm{d}t , \\
        &\mathrm{d}p_t=F(q_t) \mathrm{d}t -\gamma M^{-1}  p_t
        \mathrm{d}t +\sqrt{2\gamma\beta^{-1}} \mathrm{d}B_t ,
    \end{aligned}
\right.  
\end{equation}
where $d=3N$ for a number $N$ of particles, $(q_t,p_t) \in \mathbb{R}^d \times \mathbb{R}^d$ denotes the set of positions and momenta of the
particles, $M \in \mathbb{R}^{d \times d}$ is the mass matrix,
$F:\mathbb{R}^d \to \mathbb{R}^d$ is the force acting on the particles, $\gamma >0$ is
the friction parameter, and $\beta^{-1}= k_B T$ with $k_B$
the Boltzmann constant and $T$
the temperature of the system. Alternatively, the overdamped Langevin dynamics
\begin{equation}\label{eq:ovLangevin_intro}
  \mathrm{d}\overline{q}_t = F(\overline{q}_t)\mathrm{d} t + \sqrt{2\beta^{-1}}\mathrm{d} B_t,
\end{equation}
may also be employed. Notice that both processes are related by the fact that when the force field $F$ is conservative, that is to say when there exists $V : \mathbb{R}^d \to \mathbb{R}$ such that $F = -\nabla V$, then the stationary distribution of $(\overline{q}_t)_{t \geq 0}$ writes
\begin{equation}\label{eq:nuovLangevin}
  \overline{\nu}(\mathrm{d}q) = \frac{1}{Z}\mathrm{e}^{-\beta V(q)}, \qquad Z = \int_{\mathbb{R}^d} \mathrm{e}^{-\beta V(q)} \mathrm{d}q,
\end{equation}
while the stationary distribution of $(q_t,p_t)_{t \geq 0}$ has the product structure
\begin{equation}\label{eq:nuLangevin}
  \nu(\mathrm{d}q\mathrm{d}p) = \overline{\nu}(\mathrm{d}q) \frac{\mathrm{e}^{-\frac{\beta\vert p\vert^2}{2}}}{(2\pi\beta^{-1})^{\frac{d}{2}}}\mathrm{d}p.
\end{equation} 

The dynamics presented above are used in particular to
compute thermodynamic and dynamic quantities, with numerous
applications in biology, chemistry and materials science. In many practical situations of interest, the system remains trapped for very long times
in subsets of the phase space, called metastable states, see for
example~\cite[Sections 6.3 and 6.4]{LelSto16}. Typically, these states
are defined in terms of positions only, and are thus open sets $\mathcal{O}$ of $\mathbb{R}^d$ for~\eqref{eq:ovLangevin_intro} or  cylinders of the
form $D=\mathcal{O} \times \mathbb{R}^d$ for~\eqref{eq:Langevin_intro qsd}. In such a case, it is expected that the process reaches a local equilibrium distribution within the metastable state before leaving it. This distribution is called the quasi-stationary distribution (QSD). The existence of this limiting behavior has been shown recently in~\cite{LelRamRey2}, using compactness arguments. Similar results can also be found in the recent works:~\cite[Chapter 4]{RamPHD} based on criterias developed in~\cite{V} by N. Champagnat and D. Villemonais and in~\cite{GuiNectoux} using a Lyapunov and an Harnack inequality argument.

The motivation for this work comes from the well-known fact that, when $\gamma \to \infty$, for all $T>0$, the process $(q_{\gamma t})_{t\in[0,T]}$ converges in distribution to $(\overline{q}_{t})_{t\in[0,T]}$, hence the name overdamped Langevin dynamics for~\eqref{eq:ovLangevin_intro} (see~\cite[Proposition 2.15]{LelRouSto10} and~\cite{Kramers,Freidlin} for instance) on the space of continuous functions on $[0,T]$, endowed with the supremum norm on $[0,T]$. Therefore, it is expected that the marginal law in position of the QSD on $D$ of $(q_t,p_t)_{t\geq0}$ converges weakly to the QSD on $\mathcal{O}$ of the overdamped Langevin process. We actually prove a more general result by perturbing the Langevin dynamics to obtain an independent couple, which will allow us to consider the marginals separately, making the proof much easier. To the best of our knowledge, this result is the first to provide an overdamped limit of the couple (position,velocity) for the Langevin process. We are then able to identify the weak limit of the QSD on $D$, from which we can easily deduce the weak convergence of the marginal distributions.  

More precisely, we study the limit of the QSD on $D$, of $(q_t,p_t)_{t\geq0}$, when the friction parameter $\gamma$ goes to infinity and show that it converges to the product measure
\begin{equation}\label{product structure mu infini}
  \mu^{(\infty)}(\mathrm{d}q\mathrm{d}p) = \overline{\mu}(\mathrm{d}q) \frac{\mathrm{e}^{-\frac{\beta\vert p\vert^2}{2}}}{(2\pi\beta^{-1})^{\frac{d}{2}}}\mathrm{d}p,
\end{equation}
where $\overline{\mu}$ is the QSD of the overdamped Langevin process $(\overline{q}_t)_{t \geq 0}$ in $\mathcal{O}$. This result is stated in Section~\ref{sec: main results qsd} and it relies on recent results on the Langevin process which are recalled in Section~\ref{tightness convergence langevin}.
 
\section{Main results}\label{sec: main results qsd}  

We first introduce in Section~\ref{sec:OL} the notion of quasi-stationary distribution (QSD) and recall well-known results for the QSD of the overdamped Langevin process on a smooth bounded domain $\mathcal{O}$. We also recall in Section~\ref{sec:Lang} recent results from the companion paper~\cite{LelRamRey2} related to the existence of a unique QSD of the Langevin process on the domain $D:=\mathcal{O}\times\mathbb{R}^d$. Finally, we state our main result regarding the overdamped limit of the Langevin QSD on $D$ in Section~\ref{Main results_overdamped}.

\subsection{Quasi-stationary distribution for the overdamped Langevin process} \label{sec:OL}
 The notion of quasi-stationary distribution (QSD) is central in this text. We recall here its definition in a general setting, and refer to~\cite{Collet,VilMel} for a complete introduction. 

Let $E$ be a Polish space endowed with its Borel $\sigma$-algebra $\mathcal{B}(E)$, and let $(X_t)_{t \geq 0}$ be a time-homogeneous, strong Markov process in $E$ with continuous sample-paths. For any $x \in E$, we denote by $\mathbb{P}_x$ the probability measure under which $X_0=x$ almost surely, and for any probability measure $\theta$ on $E$, we define
\begin{equation*}
  \mathbb{P}_\theta(\cdot) := \int_E \mathbb{P}_x(\cdot)\theta(\mathrm{d}x).
\end{equation*}
Let $D$ be an open subset of $E$ and $\tau_\partial$ be the stopping time defined by
\begin{equation*}
  \tau_\partial := \inf\{t>0: X_t \not\in D\}.
\end{equation*}

\begin{definition}[QSD]\label{def QSD 1} A probability measure $\mu$ on $D$ is said to be a QSD on $D$ of the process $(X_t)_{t\geq0}$, if for all $A\in\mathcal{B}(D) := \{A\cap D,A\in\mathcal{B}(E)\}$, for all $t\geq0$,
\begin{equation}\label{eq:defqsd}
  \mathbb{P}_\mu(X_t\in A, \tau_\partial>t)=\mu(A)\mathbb{P}_\mu(\tau_\partial>t).
\end{equation}
\end{definition} 
\noindent When $\mathbb{P}_\mu(\tau_\partial>t)>0$, the identity~\eqref{eq:defqsd} equivalently writes $\mathbb{P}_\mu(X_t\in A|\tau_\partial>t)=\mu(A)$. Now let $\beta>0$ and $F:\mathbb{R}^d\mapsto\mathbb{R}^d$ satisfying the following assumption. 
\begin{hypothesis}\label{hyp F1 qsd}
$F\in\mathcal{C}^\infty(\mathbb{R}^{d},\mathbb{R}^{d})$. \end{hypothesis}

Let $(\Omega,\mathcal{F},(\mathcal{F}_t)_{t\geq0},\mathbb{P})$ be a filtered probability space and $(B_t)_{t\geq0}$ a $d$-dimensional $(\mathcal{F}_t)_{t\geq0}$-Brownian motion. Under Hypothesis~\ref{hyp F1 qsd}, the vector field $F$ is locally Lipschitz continuous and therefore the stochastic differential equation~\eqref{eq:ovLangevin_intro} possesses a unique strong solution $(\overline{q}_t)_{0 \leq t < \overline{\tau}_\infty}$ defined up to some explosion time $\overline{\tau}_\infty \in (0,+\infty]$.
Let $\mathcal{O}$ be an open set of $\mathbb{R}^d$ satisfying the following assumption.
\begin{hypothesis}\label{hyp O qsd}
$\mathcal{O}$ is an open $\mathcal{C}^2$ bounded connected set of $\mathbb{R}^d$. 
\end{hypothesis}
Let $\overline{\tau}_{\partial}:=\inf \{t>0: \overline{q}_t\notin \mathcal{O}\}$ be the first exit time from $\mathcal{O}$ of the process $(\overline{q}_t)_{0 \leq t < \overline{\tau}_\infty}$. Under Hypotheses~\ref{hyp F1 qsd} and~\ref{hyp O qsd}, the vector field $F$ is Lipschitz continuous on $\mathcal{O}$ and therefore $\overline{\tau}_\partial \leq \overline{\tau}_\infty$. 

It has been shown in~\cite{V3,GQZ,LebLelPer,KnobPart} that the overdamped Langevin process admits a unique QSD on $\mathcal{O}$, which moreover satisfies the following properties.
\begin{theorem}[QSD of the overdamped Langevin process]\label{qsd overdamped} Under Hypotheses~\ref{hyp F1 qsd} and~\ref{hyp O qsd}, there exists a unique QSD $\overline{\mu}$ on $\mathcal{O}$ of the process $(\overline{q}_t)_{t\geq0}$. Furthermore, 
\begin{enumerate} 
\item there exists $\overline{\psi}\in\mathcal{C}^2(\mathcal{O})\cap\mathcal{C}^b(\overline{\mathcal{O}})$ such that $\overline{\mu}(\mathrm{d}q)=\overline{\psi}(q)\mathrm{d}q$, where $\mathrm{d}q$ is the Lebesgue measure on~$\mathbb{R}^d$,
\item there exists $\overline{\lambda}_0>0$ such that, if $\overline{q}_0$ is distributed according to $\overline{\mu}$, then $\overline{\tau}_\partial$ follows the exponential law with parameter $\overline{\lambda}_0$.
\end{enumerate}
\end{theorem}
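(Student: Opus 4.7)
The plan is to carry out a spectral analysis of the Dirichlet realization of the generator
\begin{equation*}
L := \beta^{-1}\Delta + F \cdot \nabla
\end{equation*}
of $(\overline{q}_t)_{t\geq 0}$ on the bounded $\mathcal{C}^2$ open set $\mathcal{O}$. First, I would introduce the killed semigroup $\overline{P}_t^{\mathcal{O}} f(q) := \mathbb{E}_q[f(\overline{q}_t) \mathbf{1}_{t<\overline{\tau}_\partial}]$. Under Assumptions~\ref{hyp F1 qsd} and~\ref{hyp O qsd}, $F$ is smooth and bounded on $\overline{\mathcal{O}}$, so $L$ is a uniformly elliptic operator with smooth coefficients on a bounded smooth domain. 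Standard results then give that $\overline{P}_t^{\mathcal{O}}$ is a strongly continuous, sub-Markovian semigroup on $L^2(\mathcal{O})$ whose resolvent maps $L^2(\mathcal{O})$ into $H^2(\mathcal{O})\cap H^1_0(\mathcal{O})$ by elliptic regularity; in particular, $\overline{P}_t^{\mathcal{O}}$ is compact for every $t>0$ via the Rellich--Kondrachov embedding.

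Next, I would apply the Krein--Rutman theorem to the compact, positive operator $\overline{P}_t^{\mathcal{O}}$. Positivity together with the strong maximum principle (or the parabolic Harnack inequality) yields irreducibility, hence the spectral radius $e^{-\overline{\lambda}_0 t}$ is, for some $\overline{\lambda}_0>0$, a simple eigenvalue, associated to a strictly positive eigenfunction $\overline{\phi}$ of $L$ and to a strictly positive eigenfunction $\overline{\psi}$ of the formal $L^2$-adjoint $L^* = \beta^{-1}\Delta - \mathrm{div}(F\,\cdot\,)$, both vanishing on $\partial\mathcal{O}$. Schauder estimates applied to $L^*\overline{\psi}=-\overline{\lambda}_0\overline{\psi}$ with homogeneous Dirichlet data promote $\overline{\psi}$ to $\mathcal{C}^2(\mathcal{O})\cap \mathcal{C}^b(\overline{\mathcal{O}})$, which, after normalization by $\int_{\mathcal{O}}\overline{\psi}=1$, provides the density in point (i).

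To establish the QSD identity and point (ii), I would use duality: for any bounded Borel $f:\mathcal{O}\to\mathbb{R}$,
\begin{equation*}
\int_{\mathcal{O}} \overline{P}_t^{\mathcal{O}} f(q)\, \overline{\psi}(q)\, \mathrm{d}q \;=\; \int_{\mathcal{O}} f(q)\, (\overline{P}_t^{\mathcal{O}})^*\overline{\psi}(q)\, \mathrm{d}q \;=\; e^{-\overline{\lambda}_0 t} \int_{\mathcal{O}} f(q)\,\overline{\psi}(q)\,\mathrm{d}q.
\end{equation*}
Taking $f=\mathbf{1}_A$ gives simultaneously the defining relation~\eqref{eq:defqsd} for $\overline{\mu}(\mathrm{d}q):=\overline{\psi}(q)\mathrm{d}q$ and the identity $\mathbb{P}_{\overline{\mu}}(\overline{\tau}_\partial>t)=e^{-\overline{\lambda}_0 t}$, which is point (ii). Uniqueness then reduces to showing that any QSD must admit a density in $L^2(\mathcal{O})$ that is a positive eigenfunction of $L^*$; simplicity of the principal eigenvalue forces this density to be a positive multiple of $\overline{\psi}$.

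The main obstacle is the spectral setup without the comfort of self-adjointness: since $F$ is not assumed to derive from a potential, $L$ is generally non-symmetric in $L^2(\mathcal{O})$, which rules out a direct Hilbert-space spectral decomposition. The work-around is precisely the Krein--Rutman framework, which only requires compactness, positivity and irreducibility; the irreducibility hinges on an elliptic Harnack inequality to guarantee strict positivity of the eigenfunctions inside $\mathcal{O}$, while the Hopf boundary lemma controls their non-degenerate behavior near $\partial\mathcal{O}$. These ingredients are all available under Assumption~\ref{hyp O qsd}, and correspond to what is worked out in the cited references~\cite{V3,GQZ,LebLelPer,KnobPart}.
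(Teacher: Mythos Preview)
The paper does not prove this theorem: it is stated as a known result and attributed to the references~\cite{V3,GQZ,LebLelPer,KnobPart}. Your outline follows precisely the standard route taken in those references---compactness of the killed semigroup on a bounded smooth domain, Krein--Rutman to extract a simple principal eigenvalue with strictly positive left and right eigenfunctions, elliptic/Schauder regularity to obtain $\overline{\psi}\in\mathcal{C}^2(\mathcal{O})\cap\mathcal{C}^b(\overline{\mathcal{O}})$, and duality to recover the QSD identity and the exponential law for $\overline{\tau}_\partial$---so in that sense your proposal is aligned with what the paper defers to the literature.

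One point deserves a sentence more of care: in the uniqueness step you assert that ``any QSD must admit a density in $L^2(\mathcal{O})$ that is a positive eigenfunction of $L^*$''. That a QSD is absolutely continuous follows from the smoothing of the elliptic transition density (for $t>0$ the killed kernel has a bounded density, so $\mu = e^{\lambda t}\mu \overline{P}^{\mathcal{O}}_t$ inherits a bounded density), and the eigenfunction property is then immediate from the QSD relation. What is slightly more delicate is to conclude that the associated eigenvalue must equal the principal one $\overline{\lambda}_0$: this uses either the spectral gap part of Krein--Rutman (no other eigenvalue on the spectral circle can carry a nonnegative eigenvector) or, equivalently, the positivity-improving property of $\overline{P}^{\mathcal{O}}_t$. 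You allude to this via ``simplicity of the principal eigenvalue'', which is the right ingredient; just make explicit that simplicity here includes the statement that $\overline{\lambda}_0$ is the \emph{only} eigenvalue with a nonnegative eigenvector.
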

\subsection{Quasi-stationary distribution for the Langevin process}\label{sec:Lang}
In this section we recall some results from~\cite{LelRamRey2} that will be used henceforth. Let $\gamma>0$ and $\beta>0$ independent of $\gamma$. Consider now the following Langevin process
\begin{equation}\label{eq:Langevin qsd gamma}
  \left\{
    \begin{array}{ll}
        \mathrm{d}q^{(\gamma)}_t=p^{(\gamma)}_t \mathrm{d}t , \\
        \mathrm{d}p^{(\gamma)}_t=F(q^{(\gamma)}_t) \mathrm{d}t-\gamma  p^{(\gamma)}_t \mathrm{d}t+\sqrt{2\gamma\beta^{-1}}\mathrm{d}B_t,
    \end{array}
\right.  
\end{equation}
Under Hypothesis~\ref{hyp F1 qsd}, the stochastic differential equation~\eqref{eq:Langevin qsd gamma} possesses a unique strong solution $(X^{(\gamma)}_t=(q^{(\gamma)}_t,p^{(\gamma)}_t))_{0 \leq t < \tau^{(\gamma)}_\infty}$, defined up to some explosion time $\tau^{(\gamma)}_\infty \in (0,+\infty]$. Notice that, compared to~\eqref{eq:Langevin_intro qsd}, we
consider here and
  henceforth the mass to be identity, so that momentum is identified with velocity. 
 
Let $\tau^{(\gamma)}_{\partial}$ be the first exit time from $D$ of the Langevin process $(X^{(\gamma)}_t)_{t\geq0}$ in~\eqref{eq:Langevin qsd gamma}, i.e.  $$\tau^{(\gamma)}_{\partial}=\inf \{t>0: X^{(\gamma)}_t\notin D\} .$$
Under Hypotheses~\ref{hyp F1 qsd} and~\ref{hyp O qsd}, $F$ is Lipschitz continuous on $\mathcal{O}$ and therefore $\tau^{(\gamma)}_\partial \leq \tau^{(\gamma)}_\infty$. 
 Concerning the existence of a QSD on the domain $D:=\mathcal{O}\times\mathbb{R}^d$, similar proofs, as in the overdamped Langevin case, do not apply here. In fact, the infinitesimal generator of the process $(X^{(\gamma)}_t)_{t\geq0}$ is not elliptic but only hypoelliptic, and the natural domain $D = \mathcal{O} \times \mathbb{R}^d$ is not bounded, even if $\mathcal{O}$ is bounded. However, using a compactness argument, analogous results to Theorem~\ref{qsd overdamped} for the Langevin process~\eqref{eq:Langevin qsd gamma} are obtained in~\cite{LelRamRey2}:  
\begin{theorem}[QSD of the Langevin process]\label{thm:qsd langevin} Under Hypotheses~\ref{hyp F1 qsd} and~\ref{hyp O qsd}, there exists a unique QSD $\mu^{(\gamma)}$ on $D$ of the process $(X^{(\gamma)}_t)_{t\geq0}$. Furthermore, 
\begin{enumerate} 
\item there exists $\psi^{(\gamma)}\in\mathcal{C}^2(D)\cap\mathcal{C}^b(\overline{D})$ such that $\mu^{(\gamma)}(\mathrm{d}q\mathrm{d}p)=\psi^{(\gamma)}(q,p)\mathrm{d}q\mathrm{d}p$, where $\mathrm{d}q\mathrm{d}p$ is the Lebesgue measure on~$\mathbb{R}^{2d}$,
\item there exists $\lambda^{(\gamma)}_0>0$ such that, if $X^{(\gamma)}_0$ is distributed according to $\mu^{(\gamma)}$, then $\tau^{(\gamma)}_\partial$ follows the exponential law with parameter $\lambda^{(\gamma)}_0$.
\end{enumerate}
\end{theorem}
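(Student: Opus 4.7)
The natural approach is to construct $\mu^{(\gamma)}$ as the principal left eigenmeasure of the killed semigroup
$$P_t^{(\gamma)}f(x)=\mathbb{E}_{x}\bigl[f(X_t^{(\gamma)})\mathbf{1}_{\tau_\partial^{(\gamma)}>t}\bigr],\qquad x=(q,p)\in D,$$
and then to read off the QSD identity together with the exponential exit-time law directly from the eigenvalue equation. The generator associated with~\eqref{eq:Langevin qsd gamma} satisfies Hörmander's bracket condition — the noise direction $\partial_p$ and its commutator with the transport part $p\cdot\nabla_q$ already span all $2d$ coordinate directions — so for every $t>0$ the semigroup $P_t^{(\gamma)}$ has a smooth transition kernel on $D$.

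The main analytic effort is to establish compactness of $P_t^{(\gamma)}$ on a suitable function space, for instance a weighted $L^2$ or $L^\infty$ space in which a Gaussian profile in $p$ is integrable. Two ingredients must be combined: a Lyapunov estimate in the momentum variable, exploiting the friction $-\gamma p$ to bound (exponential or polynomial) moments uniformly and thereby produce tightness as $|p|\to\infty$; and subelliptic, Schauder-type regularity estimates up to the lateral absorbing boundary $\partial\mathcal{O}\times\mathbb{R}^d$, converting the interior smoothness into equicontinuity on bounded subsets of $\overline{D}$. Once this compactness is in place, the Krein--Rutman theorem applied to the positive operator $P_t^{(\gamma)}$ delivers a simple leading eigenvalue of the form $\mathrm{e}^{-\lambda_0^{(\gamma)}t}$ with $\lambda_0^{(\gamma)}>0$, a strictly positive right eigenfunction, and a positive left eigenmeasure, which we normalize and call $\mu^{(\gamma)}$. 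Simplicity of the eigenvalue comes from the irreducibility of the killed dynamics, itself a consequence of classical controllability arguments for hypoelliptic systems: any two interior points of $D$ can be connected in a short time without exiting and with positive transition density.

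Both conclusions of the theorem then follow almost immediately. The left-eigenvector identity
$$\mathbb{P}_{\mu^{(\gamma)}}(X_t^{(\gamma)}\in A,\,\tau_\partial^{(\gamma)}>t)=\int_D P_t^{(\gamma)}\mathbf{1}_A\,\mathrm{d}\mu^{(\gamma)}=\mathrm{e}^{-\lambda_0^{(\gamma)}t}\mu^{(\gamma)}(A)$$
gives at once~\eqref{eq:defqsd} as well as item~(ii) upon choosing $A=D$, while uniqueness of the QSD follows from the simplicity of the leading eigenvalue (any other QSD would yield a competing positive left eigenmeasure with the same eigenvalue). For item~(i), one applies hypoelliptic interior regularity together with subelliptic boundary regularity to the adjoint eigenvalue equation satisfied by the density of $\mu^{(\gamma)}$, producing $\psi^{(\gamma)}\in\mathcal{C}^2(D)\cap\mathcal{C}^b(\overline{D})$.

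The genuine obstacle is the compactness step. The combination of an unbounded momentum direction with a merely hypoelliptic (not elliptic) generator rules out both bounded-domain Sobolev compactness arguments and standard uniformly elliptic a priori estimates in $(q,p)$. One must instead splice together subelliptic estimates in the bulk and near the non-characteristic part of the lateral boundary with a Lyapunov control in $p$ that crucially exploits $\gamma>0$, which is precisely the program carried out in~\cite{LelRamRey2} and on which the present paper relies.
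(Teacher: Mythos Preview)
Your proposal is essentially correct as a high-level sketch, and in fact the paper does not prove this theorem at all: it is stated as a recall of results obtained in the companion paper~\cite{LelRamRey2}, introduced by the sentence ``using a compactness argument, analogous results to Theorem~\ref{qsd overdamped} for the Langevin process~\eqref{eq:Langevin qsd gamma} are obtained in~\cite{LelRamRey2}.'' Your outline---Krein--Rutman on a compact positive killed semigroup, with compactness coming from hypoelliptic/subelliptic regularity combined with Lyapunov control in $p$---matches the strategy of that reference, and you yourself correctly note in your final paragraph that this is ``precisely the program carried out in~\cite{LelRamRey2} and on which the present paper relies.'' So there is nothing to compare: the paper offers no independent proof, and your sketch is an accurate summary of the cited argument.
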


\subsection{Main result: Overdamped limit of the Quasi-stationary distribution of the Langevin process} \label{Main results_overdamped}  

To state the main results of this work, it is more convenient to keep track of the initial value $q$ (resp. $x=(q,p)$) of the solution to~\eqref{eq:ovLangevin_intro} (resp. to~\eqref{eq:Langevin qsd gamma}) by denoting the latter by $(\overline{q}^q_t)_{t \geq 0}$ (resp. $(X^{(\gamma),x}_t = (q^{(\gamma),x}_t,p^{(\gamma),x}_t))_{t \geq 0}$). Moreover, we need the following strengthening of Hypothesis~\ref{hyp F1 qsd}.
\begin{hypothesis}\label{hyp F2 qsd}
$F\in\mathcal{C}^\infty(\mathbb{R}^{d},\mathbb{R}^{d})$ and $F$ is bounded and globally Lipschitz continuous on $\mathbb{R}^d$. 
\end{hypothesis} 

The following theorem will be the key to obtain the overdamped limit of the QSD. It is an extension of the well-known convergence of the position marginal $(q_{\gamma t})_{t\in[0,T]}$ for the Langevin to the couple $((q_{\gamma t})_{t\in[0,T]},p_{\gamma T})$ using a novel perturbative argument.
\begin{theorem}[Generalization of the overdamped limit of the Langevin process]\label{cv loi indep} Assume that Assumption~\ref{hyp F2 qsd} holds.
Let $T>0$ and $x=(q,p)\in\mathbb{R}^{2d}$. Let $Z  \sim  \mathcal{N}_{d}(0,\beta^{-1} I_d)$ be a Gaussian vector independent of the process $(\overline{q}^q_t)_{t\in[0,T]}$. The law of the couple $((q^{(\gamma),x}_{\gamma t})_{t\in[0,T]},p^{(\gamma),x}_{\gamma T})$ converges weakly to the law of  $((\overline{q}^q_t)_{t\in[0,T]},Z)$ when $\gamma\rightarrow\infty$.
\end{theorem}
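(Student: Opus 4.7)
\emph{Proof plan.} The plan is to decouple the limit of $p^{(\gamma),x}_{\gamma T}$ from that of the trajectory by combining the Duhamel representation of the momentum with a localization of its stochastic part on a vanishing window near the right endpoint. Integrating the momentum equation in~\eqref{eq:Langevin qsd gamma}, we get
\begin{equation*}
p^{(\gamma),x}_{\gamma T} = \mathrm{e}^{-\gamma^2 T}\, p + \int_0^{\gamma T} \mathrm{e}^{-\gamma(\gamma T - s)} F(q^{(\gamma),x}_s)\,\mathrm{d}s + \sqrt{2\gamma\beta^{-1}}\int_0^{\gamma T} \mathrm{e}^{-\gamma(\gamma T - s)}\,\mathrm{d}B_s.
\end{equation*}
Under Assumption~\ref{hyp F2 qsd}, the first term is deterministic and exponentially small in $\gamma$, the second is bounded in absolute value by $\|F\|_\infty/\gamma$, and the covariance matrix of the third term $M_\gamma$ equals $\beta^{-1}(1-\mathrm{e}^{-2\gamma^2 T})I_d$, so its law already converges marginally to $\mathcal{N}_d(0,\beta^{-1}I_d)$. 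The real task is to upgrade this to joint convergence with the rescaled position trajectory.

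To that end, we would pick a sequence $\eta_\gamma \to 0$ with $\gamma\eta_\gamma \to +\infty$ (for instance $\eta_\gamma = \gamma^{-1/2}$), set $s_\gamma := \gamma T - \eta_\gamma$, and split $M_\gamma = M_\gamma^{(1)}+M_\gamma^{(2)}$ according to whether the integration variable lies in $[0,s_\gamma]$ or $[s_\gamma,\gamma T]$. Itô isometry gives that the covariance of $M_\gamma^{(1)}$ is $\beta^{-1}(\mathrm{e}^{-2\gamma\eta_\gamma}-\mathrm{e}^{-2\gamma^2 T})I_d \to 0$, whereas that of $M_\gamma^{(2)}$ is $\beta^{-1}(1-\mathrm{e}^{-2\gamma\eta_\gamma})I_d \to \beta^{-1}I_d$; crucially, $M_\gamma^{(2)}$ depends only on increments of $B$ after $s_\gamma$ and is therefore independent of $\mathcal{F}_{s_\gamma}$.

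In parallel, the classical overdamped limit (as in~\cite[Proposition~2.15]{LelRouSto10}) provides the weak convergence of $(q^{(\gamma),x}_{\gamma t})_{t\in[0,T]}$ to $(\overline{q}^q_t)_{t\in[0,T]}$ in $C([0,T],\mathbb{R}^d)$. We would then freeze the rescaled path on the residual window $[T-\eta_\gamma/\gamma,T]$ at its value at $T-\eta_\gamma/\gamma$, producing an $\mathcal{F}_{s_\gamma}$-measurable modified trajectory. Using the same Duhamel formula at a general time, together with the boundedness of $F$, one establishes a uniform bound $\sup_{\gamma\ge1,\,t\le \gamma T}\mathbb{E}|p^{(\gamma),x}_t|^2 \le C$, whence
\begin{equation*}
\mathbb{E}\sup_{t\in[T-\eta_\gamma/\gamma,T]}\bigl|q^{(\gamma),x}_{\gamma t}-q^{(\gamma),x}_{s_\gamma}\bigr| \;\le\; \mathbb{E}\int_{s_\gamma}^{\gamma T}|p^{(\gamma),x}_s|\,\mathrm{d}s \;\le\; C'\eta_\gamma \to 0,
\end{equation*}
so the freezing is asymptotically invisible in the sup norm and the modified trajectory still converges weakly to $(\overline{q}^q_t)_{t\in[0,T]}$.

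Putting everything together, the modified pair (frozen trajectory, $M_\gamma^{(2)}$) has independent components by construction: the first factor converges weakly to $(\overline{q}^q_t)_{t\in[0,T]}$ and the second to $Z$, so by independence the joint law converges to the product distribution. Since the original pair $((q^{(\gamma),x}_{\gamma t})_{t\in[0,T]},p^{(\gamma),x}_{\gamma T})$ differs from the modified one by a quantity going to zero in probability (from the estimates above), a Slutsky-type argument delivers the stated convergence. The main obstacle is the uniform-in-$\gamma$ $L^2$ control of $|p^{(\gamma),x}_t|$ on the long interval $[0,\gamma T]$ used for the freezing estimate, and this is precisely where the strengthening to a bounded globally Lipschitz $F$ in Assumption~\ref{hyp F2 qsd} is essential.
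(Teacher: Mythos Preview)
Your argument is correct, and it takes a genuinely different route to the asymptotic independence than the paper does. The paper does not localize in time; instead it performs a Gaussian orthogonalization: writing $Y_T^{(\gamma)}$ for the full stochastic part of $p^{(\gamma),x}_{\gamma T}$, it introduces the deterministic function $h_T^{(\gamma)}(t)=\frac{2}{\gamma}\frac{\mathrm{e}^{-\gamma^2(T-t)}-\mathrm{e}^{-\gamma^2 T}}{1-\mathrm{e}^{-2\gamma^2 T}}$ so that the modified driving noise $Z_{t,T}^{(\gamma)}:=\sqrt{2\beta^{-1}}B^{(\gamma)}_t-h_T^{(\gamma)}(t)Y_T^{(\gamma)}$ is a Gaussian process with zero covariance against $Y_T^{(\gamma)}$, hence independent of it for every fixed $\gamma$. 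An auxiliary overdamped equation $w^{(\gamma),q}$ is then run with this noise, and Gr\"onwall estimates (the paper's Lemma~3.2) show that both $\sup_{t\le T}|q^{(\gamma),x}_{\gamma t}-w^{(\gamma),q}_t|$ and $\sup_{t\le T}|w^{(\gamma),q}_t-\overline q^{(\gamma),q}_t|$ are $O(\gamma^{-1}\sqrt{\log\gamma})$ in $L^1$. Your approach avoids the auxiliary SDE and the function $h_T^{(\gamma)}$ entirely by exploiting instead that the exponential weight concentrates the stochastic integral on a window of length $\eta_\gamma$ near $\gamma T$, which is automatically independent of $\mathcal F_{\gamma T-\eta_\gamma}$ by the Brownian filtration; the price is that you must invoke the classical overdamped limit for the trajectory as a black box, whereas the paper's Lemma~3.2 re-derives it with explicit rates. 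Those quantitative rates are not idle: the paper reuses Lemma~3.2 later to bound $\lambda_0^{(\gamma)}\gamma$ uniformly in $\gamma$ (its Proposition~3.7), so in the context of the full paper the projection route earns its keep, but for the statement at hand your time-splitting argument is cleaner and equally valid.
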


Using this convergence, we are then able to obtain the overdamped limit of the QSD.
\begin{theorem}[QSD overdamped limit]\label{cv etroite} Let Hypotheses~\ref{hyp F1 qsd} and~\ref{hyp O qsd} hold. The QSD $\mu^{(\gamma)}$ converges weakly, when $\gamma\rightarrow\infty$, to the probability measure $\mu^{(\infty)}$ on $D$ defined by:
 \begin{equation}\label{mesure limite qsd}
     \mu^{(\infty)}(\mathrm{d}q \mathrm{d}p):=\overline{\mu}(\mathrm{d}q) \frac{\mathrm{e}^{-\frac{\beta\vert p\vert^2}{2}}}{(2\pi\beta^{-1})^{\frac{d}{2}}} \mathrm{d}p. 
 \end{equation}
 Furthermore, the eigenvalue $\lambda_0^{(\gamma)}$ associated with the QSD satisfies
 $$\lambda_0^{(\gamma)}\underset{\gamma\rightarrow\infty}{\sim}\frac{\overline{\lambda}_0}{\gamma},$$
 where $\overline{\lambda}_0$ is defined in Theorem~\ref{qsd overdamped}.
\end{theorem}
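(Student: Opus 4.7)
Every subsequence of $(\mu^{(\gamma)})_{\gamma\ge 1}$ admits a further weakly convergent sub-subsequence, so it suffices to prove that every such limit point equals $\mu^{(\infty)}$ and that the associated limit of $\gamma\lambda_0^{(\gamma)}$ equals $\overline{\lambda}_0$; the convergence of the full families then follows. Tightness of $(\mu^{(\gamma)})_{\gamma\ge 1}$ on $\mathbb{R}^{2d}$ reduces, since $\mathcal{O}$ is bounded, to a uniform-in-$\gamma$ control of the momentum marginals, which I will draw from the companion paper~\cite{LelRamRey2}. Fix henceforth a subsequence $\gamma_n\to\infty$ along which $\mu^{(\gamma_n)}\to\mu^*$ weakly and $\gamma_n\lambda_0^{(\gamma_n)}\to c\in[0,+\infty]$. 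Up to replacing $F$ by a bounded globally Lipschitz truncation coinciding with it on a neighbourhood of $\overline{\mathcal{O}}$---which affects neither the QSDs $\mu^{(\gamma)}$, $\overline{\mu}$ nor the eigenvalues $\lambda_0^{(\gamma)}$, $\overline{\lambda}_0$, as these depend only on the dynamics in $\overline{\mathcal{O}}$---Assumption~\ref{hyp F2 qsd} holds and Theorem~\ref{cv loi indep} is available.

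\textbf{Passing to the limit in the QSD identity.} The QSD property of $\mu^{(\gamma)}$, evaluated at the rescaled time $\gamma t$, reads
\begin{equation*}
\mathbb{P}_{\mu^{(\gamma)}}\!\bigl(X^{(\gamma)}_{\gamma t}\in A'\times A_p,\ \tau^{(\gamma)}_\partial>\gamma t\bigr) = \mu^{(\gamma)}(A'\times A_p)\,\mathrm{e}^{-\gamma\lambda_0^{(\gamma)} t}.
\end{equation*}
Observing that $\{\tau^{(\gamma)}_\partial>\gamma t\}=\{q^{(\gamma)}_{\gamma s}\in\mathcal{O}\text{ for all }s\in[0,t]\}$ is a functional of the rescaled position path alone, I combine Theorem~\ref{cv loi indep}, extended from deterministic to the random initial data $X^{(\gamma_n)}_0\sim\mu^{(\gamma_n)}$, with the $\mathcal{C}^2$ regularity of $\partial\mathcal{O}$, which ensures a.s.\ continuity of the first-exit-time functional along the limiting overdamped path. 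Passing to the limit in both sides yields, for every continuity set $A'\subset\mathcal{O}$ of the position marginal of $\mu^*$ and every continuity set $A_p$ of the Gaussian,
\begin{equation*}
\mathbb{P}_{\overline{\mu}^*}\bigl(\overline{q}_t\in A',\ \overline{\tau}_\partial>t\bigr)\,\mathbb{P}(Z\in A_p) \;=\; \mu^*(A'\times A_p)\,\mathrm{e}^{-c t},
\end{equation*}
where $\overline{\mu}^*$ denotes the position marginal of $\mu^*$ and the factorisation on the left-hand side reflects the independence of $Z$ from the overdamped path asserted in Theorem~\ref{cv loi indep}.

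\textbf{Identification of the limit and main obstacle.} Setting $A_p=\mathbb{R}^d$ above shows that $\overline{\mu}^*$ is, once one verifies that no mass is lost on $\partial\mathcal{O}$, a QSD on $\mathcal{O}$ for $(\overline{q}_t)_{t\ge 0}$ with parameter $c$; uniqueness in Theorem~\ref{qsd overdamped} then forces $\overline{\mu}^*=\overline{\mu}$ and $c=\overline{\lambda}_0$. Plugging these back yields $\mu^*(A'\times A_p)=\overline{\mu}(A')\,\mathbb{P}(Z\in A_p)$, i.e.\ $\mu^*=\mu^{(\infty)}$. Since this limit is independent of the chosen subsequence, the full convergences $\mu^{(\gamma)}\to\mu^{(\infty)}$ and $\gamma\lambda_0^{(\gamma)}\to\overline{\lambda}_0$ follow. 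The main obstacle lies in the limit passage of the second step: Theorem~\ref{cv loi indep} is stated only for a deterministic $x$, and must be promoted to the $\gamma$-dependent random initial law $\mu^{(\gamma_n)}$, either via a Skorokhod coupling combined with a.s.\ convergence of the rescaled paths and their exit functionals, or via a uniform-on-compacts strengthening of Theorem~\ref{cv loi indep}; the companion control ruling out mass escaping to $\partial\mathcal{O}\times\mathbb{R}^d$ in the weak limit will likewise hinge on the a priori estimates from~\cite{LelRamRey2}.
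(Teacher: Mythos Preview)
Your overall strategy---tightness, subsequential limits, passage to the limit in the QSD identity at rescaled time $\gamma t$, identification via uniqueness of the overdamped QSD---is exactly the paper's. The difference lies in the two technical points you flag as obstacles but do not resolve, and these are in fact the substance of the proof.

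\textbf{Tightness is not available from~\cite{LelRamRey2}.} That paper gives existence of $\mu^{(\gamma)}$ for fixed $\gamma$; it does not provide estimates uniform in $\gamma$. The paper establishes tightness on $D$ (not merely $\overline{D}$) through Proposition~\ref{properties psi gamma}, which gives $\gamma$-uniform bounds on $\|\psi^{(\gamma)}\|_\infty$, on $\sup_q\int\psi^{(\gamma)}(q,p)\,\mathrm{d}p$, and on $\iint|p|\psi^{(\gamma)}\,\mathrm{d}q\,\mathrm{d}p$. These follow from the Gaussian upper bound on the absorbed transition density (Theorem~\ref{borne densite thm qsd}) together with the a~priori bound $\limsup_{\gamma\to\infty}\gamma\lambda_0^{(\gamma)}<\infty$ (Proposition~\ref{bornitude lambda}), which is itself proved by a separate probabilistic argument. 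Without this eigenvalue bound you cannot exclude $c=+\infty$ in your limit, and without the $L^\infty$ and moment bounds on $\psi^{(\gamma)}$ you can neither rule out mass escaping to $\partial\mathcal{O}\times\mathbb{R}^d$ nor carry out the next step.

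\textbf{Promoting Theorem~\ref{cv loi indep} to random initial data.} The paper does not use a Skorokhod coupling or a uniform-on-compacts strengthening. Instead it integrates the \emph{pointwise} convergence of Lemma~\ref{lem:cv loi discont} against $\psi^{(\gamma'_n)}(q,p)\,\mathrm{d}q\,\mathrm{d}p$, splits the $p$-integral into $\{|p|\le K\}$ and $\{|p|>K\}$, and uses $\sup_n\|\psi^{(\gamma'_n)}\|_\infty<\infty$ with dominated convergence on the first piece and the uniform first-moment bound on the second. Your Skorokhod suggestion is plausible but would still require the uniform moment control to handle the $|p|$-dependence of the error in Lemma~\ref{borne L1}; in effect you would be re-deriving the same density estimates by another route.

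In short, your outline is correct but omits precisely the technical core: Propositions~\ref{properties psi gamma} and~\ref{bornitude lambda}.
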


Theorem~\ref{cv loi indep} is proven in Section~\ref{proof cv loi indep} and Theorem~\ref{cv etroite} is proven in Section~\ref{proof cv etroite}.

\section{Proofs}\label{tightness convergence langevin} 

We are interested  in the behavior of the QSD of the Langevin process defined in~\eqref{eq:Langevin qsd gamma} when $\gamma$ goes to infinity. 
We shall use the following notation: under Assumption~\ref{hyp F2 qsd}, for any $x=(q,p) \in \mathbb{R}^d$, we denote by $(X^{(\gamma),x}_t=(q^{(\gamma),x}_t,p^{(\gamma),x}_t))_{t \geq 0}$ the solution to~\eqref{eq:Langevin qsd gamma} with initial condition $x$, and by $(\overline{q}^{(\gamma),q}_t)_{t \geq 0}$ the solution to the stochastic differential equation~\eqref{eq:ovLangevin_intro} with initial condition $q$ and driven by the Brownian motion $(B^{(\gamma)}_t)_{t \geq 0}=(\frac{B_{\gamma t}}{\sqrt{\gamma}})_{t\geq0}$. All these processes are defined on the same probability space $(\Omega, \mathcal{F}, (\mathcal{F}_t)_{t \geq 0},\mathbb{P})$ and it is more convenient to keep track of the initial condition of each process with the superscript notation rather than in the probability measure. We also emphasize the fact that under Assumption~\ref{hyp F2 qsd}, uniqueness in distribution holds for the stochastic differential equation~\eqref{eq:ovLangevin_intro} and therefore the law of the process $(\overline{q}^{(\gamma),q}_t)_{t \geq 0}$ does not depend on $\gamma$.

\subsection{Proof of Theorem~\ref{cv loi indep}}\label{proof cv loi indep}
Let $x=(q,p)\in\mathbb{R}^{2d}$, $T>0$. First, let us briefly show the scheme of proof for the convergence of the marginal laws $(q^{(\gamma),x}_{\gamma t})_{t\in[0,T]}$ and $p^{(\gamma),x}_{\gamma T}$, which is standard in the litterature. Second, we introduce a perturbed Langevin dynamics having the same overdamped limit as the dynamics~\eqref{eq:Langevin qsd gamma}. The perturbed dynamics being an independent couple, we shall retrieve its overdamped limit through the overdamped limit of the marginals, from which we will conclude the proof of Theorem~\ref{cv loi indep}.  

Let us start by considering the convergence of the marginal laws of $((q^{(\gamma),x}_{\gamma t})_{t\in[0,T]},p^{(\gamma),x}_{\gamma T})$. Considering~\eqref{eq:Langevin qsd gamma}, we have almost surely, for $t\in[0,T]$,
\begin{equation}\label{eq:formintegqgammat}
  q^{(\gamma),x}_{\gamma t}=q-\frac{p^{(\gamma),x}_{\gamma t}-p}{\gamma}+\int_0^tF(q^{(\gamma),x}_{\gamma s}) \mathrm{d}s+\sqrt{2\beta^{-1}} B^{(\gamma)}_t.
\end{equation} 
Using Gronwall's lemma, we are able to deduce from this equality the inequalities~\eqref{borne 1} and~\eqref{borne 2} in Lemma~\ref{borne L1}, which ensure that the difference $(q^{(\gamma),x}_{\gamma t})_{t\in[0,T]}-(\overline{q}^{(\gamma),q}_t)_{t \in [0,T]}$ converges in probability to $0$, in the space of the bounded continuous functions on $[0,T]$. Furthermore, the process $(\overline{q}^{(\gamma),q}_t)_{t \in [0,T]}$ shares the same law as the process $(\overline{q}^q_t)_{t\in[0,T]}$, which does not depend on $\gamma$. Therefore, the law of the process $(q^{(\gamma),x}_{\gamma t})_{t\in[0,T]}$ converges weakly to the law of $(\overline{q}^q_t)_{t\in[0,T]}$ when $\gamma$ goes to infinity.

Moreover, it follows from~\eqref{eq:Langevin qsd gamma} that for all $t\geq0$,
\begin{equation}\label{eq:duhamel eq vitesse}
    p^{(\gamma),x}_{t}= p \mathrm{e}^{-\gamma t}+\mathrm{e}^{-\gamma  t}\int_0^{t}\mathrm{e}^{\gamma s} F(q^{(\gamma),x}_{s}) \mathrm{d}s+\sqrt{2\gamma\beta^{-1}}\mathrm{e}^{-\gamma t} \int_0^{t}\mathrm{e}^{\gamma s} \mathrm{d}B_s.
\end{equation}
For $t \geq 0$, let 
\begin{equation}\label{def Y_T gamma}
    Y_{t}^{(\gamma)}:=\sqrt{2\gamma\beta^{-1}}\mathrm{e}^{-\gamma^2 t} \int_0^{\gamma t}\mathrm{e}^{\gamma s} \mathrm{d}B_s,
\end{equation}
then evaluating~\eqref{eq:duhamel eq vitesse} at $t=\gamma T$ for $T\geq0$, we get
\begin{equation}\label{vitesse gamma T}
    p^{(\gamma),x}_{\gamma T}= p \mathrm{e}^{-\gamma^2 T}+\gamma \mathrm{e}^{-\gamma^2 T}\int_0^T\mathrm{e}^{\gamma^2 s} F(q^{(\gamma),x}_{\gamma s}) \mathrm{d}s+Y_T^{(\gamma)}. 
\end{equation}
Under Assumption~\ref{hyp F2 qsd}, $F$ is bounded. Besides, $Y_T^{(\gamma)} \sim \mathcal{N}_d(0,\beta^{-1}(1-\mathrm{e}^{-2 \gamma^2 T}) I_d)$. Therefore, $Y_T^{(\gamma)}\overset{\mathcal{L}}{\underset{\gamma\rightarrow\infty}{\longrightarrow}}Z$ where $Z\sim \mathcal{N}_{d}(0,\beta^{-1} I_d)$ and $p^{(\gamma),x}_{\gamma T}\overset{\mathcal{L}}{\underset{\gamma\rightarrow\infty}{\longrightarrow}}Z$ by Slutsky's theorem.  


The arguments above give the limit in law of the marginals of the couple $((q^{(\gamma),x}_{\gamma t})_{t\in[0,T]},p^{(\gamma),x}_{\gamma T})$. To prove Theorem~\ref{cv loi indep}, it remains to show that, in the limit $\gamma\rightarrow\infty$, the two random variables $(q^{(\gamma),x}_{\gamma t})_{t\in[0,T]}$ and $p^{(\gamma),x}_{\gamma T}$ are independent. This is done by introducing a perturbed Langevin process defined later. Let $h^{(\gamma)}_T:[0,T]\mapsto\mathbb{R}$ and the process $(Z_{t,T}^{(\gamma)})_{t\in[0,T]}$ be defined as follows:
\begin{equation}\label{expr h}
    \forall t\in[0,T],\qquad h^{(\gamma)}_T(t):=\frac{2}{\gamma}\frac{\mathrm{e}^{- \gamma^2 (T-t)}-\mathrm{e}^{- \gamma^2 T}}{1-\mathrm{e}^{-2 \gamma^2 T}}, 
\end{equation} 
$$Z_{t,T}^{(\gamma)}:=\sqrt{2\beta^{-1}} B^{(\gamma)}_t-h^{(\gamma)}_T(t) Y_T^{(\gamma)}.$$

\noindent Let $(\mathcal{F}^{(\gamma),Z}_t)_{t \in [0,T]}$ be the natural filtration of $(Z_{t,T}^{(\gamma)})_{t\in[0,T]}$. Under Assumption~\ref{hyp F2 qsd}, Itô's fixed point argument~\cite[Thm 2.9 p. 289]{Karatzas} shows that the stochastic differential equation 
\begin{equation}\label{Overdamed Langevin 2}
  \left\{
    \begin{array}{ll}
        \mathrm{d}w^{(\gamma),q}_t=F(w^{(\gamma),q}_t) \mathrm{dt}+\mathrm{d}Z_{t,T}^{(\gamma)} , \\
        w^{(\gamma),q}_0=q,
    \end{array}
\right.   
\end{equation}
possesses a unique strong solution $(w^{(\gamma),q}_t)_{t \in [0,T]}$, which is thus adapted to $(\mathcal{F}^{(\gamma),Z}_t)_{t \in [0,T]}$.

\noindent The process $((w^{(\gamma),q}_t)_{t\in[0,T]},Y_T^{(\gamma)})$ satisfies the following lemmata. \begin{lemma}[Independence]\label{lemma indep} Under Assumption~\ref{hyp F2 qsd}, for all $T>0$, the process $(w^{(\gamma),q}_t)_{t\in[0,T]}$ is independent of the random variable $Y_T^{(\gamma)}$. 
\end{lemma}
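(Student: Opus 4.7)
The plan is to exploit the Gaussian structure of the pair $\bigl((Z^{(\gamma)}_{t,T})_{t\in[0,T]}, Y^{(\gamma)}_T\bigr)$: the function $h^{(\gamma)}_T$ has been engineered precisely so that, for every $t\in[0,T]$, $Z^{(\gamma)}_{t,T}$ is uncorrelated with $Y^{(\gamma)}_T$, and for Gaussian vectors uncorrelated means independent. Once independence of the driving noise process and $Y^{(\gamma)}_T$ is established, independence of $(w^{(\gamma),q}_t)_{t\in[0,T]}$ and $Y^{(\gamma)}_T$ follows because $w^{(\gamma),q}$ is measurable with respect to the path of $Z^{(\gamma)}_{\cdot,T}$.

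First, I would note that $\bigl(B^{(\gamma)}_t\bigr)_{t\in[0,T]}$ and $Y^{(\gamma)}_T$ are both deterministic linear functionals of the Brownian motion $(B_s)_{s\in[0,\gamma T]}$, so any finite collection $\bigl(Z^{(\gamma)}_{t_1,T},\ldots,Z^{(\gamma)}_{t_n,T},Y^{(\gamma)}_T\bigr)$ is jointly Gaussian. Using Itô's isometry, for any $t\in[0,T]$,
\begin{equation*}
\mathbb{E}\bigl[B^{(\gamma)}_t \otimes Y^{(\gamma)}_T\bigr]
= \sqrt{2\beta^{-1}}\,\mathrm{e}^{-\gamma^2 T}\int_0^{\gamma t}\mathrm{e}^{\gamma s}\,\mathrm{d}s\, I_d
= \frac{\sqrt{2\beta^{-1}}}{\gamma}\bigl(\mathrm{e}^{-\gamma^2(T-t)}-\mathrm{e}^{-\gamma^2 T}\bigr)I_d,
\end{equation*}
while $\mathrm{Var}(Y^{(\gamma)}_T)=\beta^{-1}(1-\mathrm{e}^{-2\gamma^2 T})I_d$. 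Substituting into the definition of $Z^{(\gamma)}_{t,T}$ yields
\begin{equation*}
\mathrm{Cov}\bigl(Z^{(\gamma)}_{t,T},Y^{(\gamma)}_T\bigr)
= \frac{2\beta^{-1}}{\gamma}\bigl(\mathrm{e}^{-\gamma^2(T-t)}-\mathrm{e}^{-\gamma^2 T}\bigr)I_d
- h^{(\gamma)}_T(t)\,\beta^{-1}\bigl(1-\mathrm{e}^{-2\gamma^2 T}\bigr)I_d,
\end{equation*}
which vanishes by the very definition~\eqref{expr h} of $h^{(\gamma)}_T(t)$.

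Because the vector $\bigl(Z^{(\gamma)}_{t_1,T},\ldots,Z^{(\gamma)}_{t_n,T},Y^{(\gamma)}_T\bigr)$ is jointly Gaussian and its cross-covariance matrix between the first $n$ components and $Y^{(\gamma)}_T$ is zero, $(Z^{(\gamma)}_{t_1,T},\ldots,Z^{(\gamma)}_{t_n,T})$ is independent of $Y^{(\gamma)}_T$. By a standard monotone class argument applied to cylinder sets in path space, this finite-dimensional independence lifts to independence of the $\sigma$-algebras $\sigma\bigl((Z^{(\gamma)}_{s,T})_{s\in[0,T]}\bigr)$ and $\sigma(Y^{(\gamma)}_T)$.

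To conclude, since $(w^{(\gamma),q}_t)_{t\in[0,T]}$ is adapted to $(\mathcal{F}^{(\gamma),Z}_t)_{t\in[0,T]}$, its entire trajectory is measurable with respect to $\sigma\bigl((Z^{(\gamma)}_{s,T})_{s\in[0,T]}\bigr)$, and hence independent of $Y^{(\gamma)}_T$. The only real step is the covariance computation that confirms $h^{(\gamma)}_T$ has been chosen to cancel the cross-covariance exactly; everything else is a soft consequence of the Gaussian/measure-theoretic framework.
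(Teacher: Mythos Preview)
Your proof is correct and follows essentially the same approach as the paper: reduce to independence of $(Z^{(\gamma)}_{t,T})_{t\in[0,T]}$ and $Y^{(\gamma)}_T$ via measurability of $w^{(\gamma),q}$, observe that all finite-dimensional marginals are jointly Gaussian, and check that the cross-covariance vanishes by the choice of $h^{(\gamma)}_T$. The only difference is that you spell out the covariance computation explicitly (the paper just calls it ``an easy computation'') and add a monotone class step to pass from finite-dimensional to process-level independence, which the paper leaves implicit.
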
 
\begin{proof}
Let $T>0$. Since $(w^{(\gamma),q}_t)_{t\in[0,T]}$ is $\mathcal{F}^{(\gamma),Z}_T$-measurable, it is sufficient to prove that the process $(Z_{t,T}^{(\gamma)})_{t\in[0,T]}$ is independent of $Y_T^{(\gamma)}$. It is clear that for any $t_1, \ldots, t_k \in [0,T]$, the vector $(Z_{t_1,T}^{(\gamma)}, \ldots, Z_{t_k,T}^{(\gamma)}, Y_T^{(\gamma)})$ is Gaussian, therefore the independence is satisfied if and only if for all $t\in[0,T]$, the covariance matrix of $(Z_{t,T}^{(\gamma)},Y_T^{(\gamma)})$ is null, which is indeed the case by an easy computation.
\end{proof}  

\begin{lemma}[Perturbed Langevin]\label{borne L1} Let Assumption~\ref{hyp F2 qsd} hold.
There exists $C>0$ such that for all $T>0$, $x=(q,p)\in\mathbb{R}^{2d}$, $\gamma>1$,
\begin{enumerate} 
    \item\label{borne 1} $\mathbb{E}\left[\sup_{t\in[0,T]}\left\vert q^{(\gamma),(q,p)}_{\gamma t}-w^{(\gamma),q}_t\right\vert\right]\leq \frac{C}{\gamma}\left(1+\vert p\vert+ \sqrt{\log(1+\gamma^2T)}\right)\mathrm{e}^{CT}$,
    \item\label{borne 2} $\mathbb{E}\left[\sup_{t\in[0,T]}\left\vert w^{(\gamma),q}_{t}-\overline{q}^{(\gamma),q}_t\right\vert\right]\leq\frac{C}{\gamma}\mathrm{e}^{CT}$.
\end{enumerate}  
\end{lemma}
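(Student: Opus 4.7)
The plan is to establish the two bounds in turn, starting with (ii) which is the simpler of the two. The processes $\overline q^{(\gamma),q}$ and $w^{(\gamma),q}$ both solve an SDE with drift $F$ and are driven by $\sqrt{2\beta^{-1}}B^{(\gamma)}$; the only difference between~\eqref{eq:ovLangevin_intro} (driven by $B^{(\gamma)}$) and~\eqref{Overdamed Langevin 2} is the extra perturbation $-h^{(\gamma)}_T(t)\,Y^{(\gamma)}_T$ in the latter. Subtracting the two integral equations therefore gives
\[
w^{(\gamma),q}_t - \overline q^{(\gamma),q}_t = \int_0^t\bigl(F(w^{(\gamma),q}_s)-F(\overline q^{(\gamma),q}_s)\bigr)\,\mathrm{d}s - h^{(\gamma)}_T(t)\,Y^{(\gamma)}_T.
\]
From the explicit formula~\eqref{expr h} one reads $\sup_{t\in[0,T]}|h^{(\gamma)}_T(t)|\leq 2/\gamma$ (uniformly in $T>0$), and under Assumption~\ref{hyp F2 qsd} the field $F$ is globally Lipschitz; Gronwall's lemma therefore yields $\sup_{t\in[0,T]}|w^{(\gamma),q}_t-\overline q^{(\gamma),q}_t| \leq (2/\gamma)\,|Y^{(\gamma)}_T|\,\mathrm{e}^{\mathrm{Lip}(F)T}$, and taking expectations together with $\mathbb{E}|Y^{(\gamma)}_T|\leq\sqrt{d/\beta}$ (since $Y^{(\gamma)}_T\sim\mathcal{N}_d(0,\beta^{-1}(1-\mathrm{e}^{-2\gamma^2 T})I_d)$) proves (ii).

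For (i), the same subtraction strategy is applied, now between~\eqref{eq:formintegqgammat} (the integrated form of $q^{(\gamma),x}_{\gamma\cdot}$) and~\eqref{Overdamed Langevin 2}: both are driven by $\sqrt{2\beta^{-1}}B^{(\gamma)}$, and the relation gives
\[
q^{(\gamma),x}_{\gamma t} - w^{(\gamma),q}_t = -\frac{p^{(\gamma),x}_{\gamma t}-p}{\gamma} + \int_0^t\bigl(F(q^{(\gamma),x}_{\gamma s})-F(w^{(\gamma),q}_s)\bigr)\,\mathrm{d}s + h^{(\gamma)}_T(t)\,Y^{(\gamma)}_T.
\]
Gronwall's lemma reduces (i) to bounding $\mathbb{E}\sup_{t\in[0,T]}|p^{(\gamma),x}_{\gamma t}|$. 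Evaluating the Duhamel formula~\eqref{eq:duhamel eq vitesse} at time $\gamma t$ and using that $F$ is bounded by Assumption~\ref{hyp F2 qsd}, the deterministic and drift contributions to $p^{(\gamma),x}_{\gamma t}$ are controlled by $|p|+\|F\|_\infty$, so that
\[
\sup_{t\in[0,T]}|p^{(\gamma),x}_{\gamma t}|\,\leq\, |p|+\|F\|_\infty + \sup_{t\in[0,T]}|Y^{(\gamma)}_t|.
\]
Everything therefore hinges on a maximal inequality for the Gaussian process $(Y^{(\gamma)}_t)_{t\in[0,T]}$ defined in~\eqref{def Y_T gamma}.

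The main obstacle is precisely this supremum bound: one needs $\mathbb{E}\sup_{t\in[0,T]}|Y^{(\gamma)}_t|\leq C\sqrt{\log(1+\gamma^2 T)}$, and the $\sqrt{\log}$ factor is exactly what will appear in the final statement. A short calculation (rescaling $B$ into a BM $\tilde B$ at timescale $\gamma$) identifies $Y^{(\gamma)}$ as an Ornstein–Uhlenbeck process $\mathrm{d}Y^{(\gamma)}_t=-\gamma^2 Y^{(\gamma)}_t\,\mathrm{d}t+\gamma\sqrt{2\beta^{-1}}\,\mathrm{d}\tilde B_t$ with invariant variance $\beta^{-1}$ and mixing time $\gamma^{-2}$. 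Via a Dubins–Schwarz time-change one then writes $|Y^{(\gamma)}_t|=\sqrt{\beta^{-1}}\,|\hat B_{u(t)}|/\sqrt{u(t)+(2\gamma^2)^{-1}}$ with $u(t)=(\mathrm{e}^{2\gamma^2 t}-1)/(2\gamma^2)$ and $\hat B$ a standard BM; a dyadic decomposition of $[0,u(T)]$ into intervals $[2^{k}(2\gamma^2)^{-1},2^{k+1}(2\gamma^2)^{-1}]$ with $k=0,\dots,K$ and $K\lesssim \gamma^2 T$, combined with Doob's $L^2$-inequality on each piece and a Gaussian union bound, yields the desired $\sqrt{\log(1+\gamma^2 T)}$ estimate. (Equivalently, one may cite a standard maximal inequality for a stationary OU on a time interval of length $\gamma^2 T$ in its natural timescale.) Injecting this estimate into the Gronwall inequality above, together with $|h^{(\gamma)}_T|\leq 2/\gamma$ and $\mathbb{E}|Y^{(\gamma)}_T|\leq\sqrt{d/\beta}$, produces the right-hand side $\frac{C}{\gamma}(1+|p|+\sqrt{\log(1+\gamma^2 T)})\mathrm{e}^{CT}$ and completes the proof.
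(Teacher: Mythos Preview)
Your proof is correct and follows essentially the same route as the paper: subtract the integral equations, apply Gronwall using the Lipschitz continuity of $F$, and reduce everything to the maximal inequality $\mathbb{E}\sup_{t\in[0,T]}|Y^{(\gamma)}_t|\lesssim\sqrt{\log(1+\gamma^2 T)}$. The only difference is that the paper obtains this last bound by citing a known sharp estimate for the supremum of an Ornstein--Uhlenbeck process (after writing $Y^{(\gamma)}_t=\sqrt{2\gamma\beta^{-1}}H^{(\gamma)}_{\gamma t}$ with $\mathrm{d}H^{(\gamma)}_t=-\gamma H^{(\gamma)}_t\,\mathrm{d}t+\mathrm{d}B_t$), whereas you sketch a self-contained derivation via Dubins--Schwarz and a dyadic decomposition; both lead to the same conclusion.
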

The proof of Lemma~\ref{borne L1} is postponed to Section~\ref{Proofs}. These two lemmata now yield the following proof of Theorem~\ref{cv loi indep}.

\begin{proof}[Proof of Theorem~\ref{cv loi indep}]
Let $T>0$, $x=(q,p)\in\mathbb{R}^{2d}$. Let $\Phi$ be a bounded $k_\Phi$-Lipschitz continuous function on $\mathcal{C}([0,T],\mathbb{R}^d)$ equipped with the supremum norm on $[0,T]$ and let $g$ be a bounded $k_g$-Lipschitz continuous function on $\mathbb{R}^d$. Our goal is to prove the following convergence: 
\begin{equation}\label{cv indep qsd}
    \mathbb{E}\left[\Phi((q^{(\gamma),x}_{\gamma t})_{t\in[0,T]}) g(p^{(\gamma),x}_{\gamma T})\right]\underset{\gamma\rightarrow\infty}{\longrightarrow}\mathbb{E}\left[\Phi((\overline{q}^q_{t})_{t\in[0,T]})\right] \mathbb{E}\left[g(Z)\right],
\end{equation}
where, in the right-hand side, $(\overline{q}^q_{t})_{t\in[0,T]}$ refers to the solution of~\eqref{eq:ovLangevin_intro} (which we recall has the same law as all processes $(\overline{q}^{(\gamma),q}_{t})_{t\in[0,T]}$ for $\gamma>0$).

By \emph{\eqref{borne 1}} in Lemma~\ref{borne L1} and~\eqref{vitesse gamma T}, there exists $C'>0$, depending on $T$, such that for all $\gamma>1$, 
\begin{align*}
    &\left\vert\mathbb{E}\left[\Phi((q^{(\gamma),x}_{\gamma t})_{t\in[0,T]}) g(p^{(\gamma),x}_{\gamma T})\right]-\mathbb{E}\left[\Phi((w^{(\gamma),q}_{t})_{t\in[0,t]}) g(Y_T^{(\gamma)})\right]\right\vert\\
    &\leq k_\Phi \Vert g\Vert_\infty\frac{C'}{\gamma}\left(1+\vert p\vert+ \sqrt{\log(1+\gamma^2T)}\right)+ k_g \Vert\Phi\Vert_\infty\left(\vert p\vert \mathrm{e}^{-\gamma^2 T}+\frac{\Vert F\Vert_\infty}{\gamma}\right),
\end{align*} 
which converges to $0$ when $\gamma\rightarrow\infty$. Furthermore, by Lemma~\ref{lemma indep},
$$\mathbb{E}\left[\Phi((w^{(\gamma),q}_{t})_{t\in[0,T]}) g(Y_T^{(\gamma)})\right]=\mathbb{E}\left[\Phi((w^{(\gamma),q}_{t})_{t\in[0,T]})\right] \mathbb{E}\left[g(Y_T^{(\gamma)})\right] .$$
Since,
$Y_T^{(\gamma)} \sim \mathcal{N}_d(0,\beta^{-1}(1-\mathrm{e}^{-2 \gamma^2 T}) I_d)$ then $Y_T^{(\gamma)}\overset{\mathcal{L}}{\underset{\gamma\rightarrow\infty}{\longrightarrow}}Z$ with $Z \sim \mathcal{N}_d(0,\beta^{-1} I_d )$. As a result, $\mathbb{E}[g(Y_T^{(\gamma)})]\underset{\gamma\rightarrow\infty}{\longrightarrow}\mathbb{E}[g(Z)]$. Besides, using \emph{\eqref{borne 2}} in Lemma~\ref{borne L1}, one obtains that  $$\mathbb{E}\left[\Phi((w^{(\gamma),q}_{t})_{t\in[0,T]})\right]-\mathbb{E}\left[\Phi((\overline{q}^{(\gamma),q}_{t})_{t\in[0,T]})\right]\underset{\gamma\rightarrow\infty}{\longrightarrow}0.$$
Moreover, $\mathbb{E}[\Phi((\overline{q}^{(\gamma),q}_{t})_{t\in[0,T]})]=\mathbb{E}[\Phi((\overline{q}^q_{t})_{t\in[0,T]})]$, since $(\overline{q}^{(\gamma),q}_{t})_{t\in[0,T]}$ and $(\overline{q}^q_{t})_{t\in[0,T]}$ share the same law, which concludes the proof of~\eqref{cv indep qsd}.
\end{proof}
\subsection{Proof of Theorem~\ref{cv etroite}}\label{proof cv etroite}

We consider in this section the weak limit, when $\gamma\rightarrow\infty$, of the QSD $\mu^{(\gamma)}$ of the Langevin process on $D$. Furthermore, we only assume here that $F$ satisfies Hypothesis~\ref{hyp F1 qsd}. In fact, we consider here the QSD on $D$ of the process~\eqref{eq:Langevin qsd gamma} which only depends on the values of the process inside $D$, hence on the values of $F$ inside $\mathcal{O}$ by Friedman's uniqueness result~\cite[Theorem 5.2.1.]{F}. As a result, one can extend $F$ arbitrarily outside of $\mathcal{O}$ so that it satisfies Assumption~\ref{hyp F2 qsd}. The notation for the overdamped Langevin process and its QSD remains the same as in Theorem~\ref{qsd overdamped}.

The idea of the proof of Theorem~\ref{cv etroite} is the following. We pick an arbitrary sequence $(\gamma_n)_{n\geq1}$ of positive numbers going to infinity. We first prove that the sequence of probability measures $(\mu^{(\gamma_n)})_{n\geq1}$ is tight. Then using Prokhorov's theorem we obtain a convergent subsequence $(\mu^{(\gamma'_n)})_{n\geq1}$ to a probability measure $\mu'$ on $D$. It is then left to prove that such a $\mu'$ is necessarily $\mu^{(\infty)}$ (see
~\eqref{mesure limite qsd}), whatever the sequence $(\gamma'_n)_{n\geq1}$. As a result, $\mu^{(\gamma)}$ necessarily converges weakly, when $\gamma$ goes to infinity, to $\mu^{(\infty)}$ defined by~\eqref{mesure limite qsd}.

This approach allows us to obtain the existence of a weak limit for the QSD and to identify it. However, it does not provide a speed of convergence, which can be interesting in applied contexts for instance. Nonetheless, in the simpler case of a stationary distribution, we are able to obtain a speed of convergence in Wasserstein distance for the overdamped limit of the stationary distribution, using estimates from Lemma~\ref{borne L1} and Theorem~\ref{cv loi indep}, see~\cite{MonRam}. Obtaining a speed of convergence for the QSD instead is still an open problem which is being looked at.

Now, let $(\gamma_n)_{n\geq1}$ be an arbitrary sequence of positive numbers going to infinity. Let us first prove that the sequence  $(\mu^{(\gamma_n)})_{n\geq1}$ is tight. This is the consequence of the following lemma which is proven in Section~\ref{Proofs}.  

\begin{proposition}[Estimates on $\psi^{(\gamma)}$]\label{properties psi gamma} Under Hypotheses~\ref{hyp F1 qsd} and~\ref{hyp O qsd}, the density $\psi^{(\gamma)}$ of the QSD $\mu^{(\gamma)}$ of~\eqref{eq:Langevin qsd gamma} satisfies the following properties:
\begin{enumerate} 
    \item $ \limsup_{\gamma\rightarrow\infty}\big\Vert\psi^{(\gamma)}\big\Vert_\infty<\infty$, 
    \item $ \limsup_{\gamma\rightarrow\infty}\sup_{q\in\mathcal{O}}\int_{\mathbb{R}^d}\psi^{(\gamma)}(q,p) \mathrm{d}p<\infty$,
    \item $ \limsup_{\gamma\rightarrow\infty}\iint_{D}\vert p\vert \psi^{(\gamma)}(q,p) \mathrm{d}p \mathrm{d}q<\infty$.
\end{enumerate}   
\end{proposition}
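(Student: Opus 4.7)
The plan is to exploit the QSD invariance identity
\begin{equation*}
    \mathrm{e}^{-\lambda_0^{(\gamma)} t}\mu^{(\gamma)}(\phi) = \int_D \mathbb{E}_x\!\left[\phi(X^{(\gamma),x}_t)\mathbf{1}_{\{\tau^{(\gamma)}_\partial > t\}}\right]\mu^{(\gamma)}(\mathrm{d}x)
\end{equation*}
at the diffusive time scale $t=\gamma T$ for a suitable $T>0$, together with the Duhamel formula~\eqref{vitesse gamma T} and Theorem~\ref{cv loi indep}. As noted earlier in the excerpt, the QSD depends only on the values of $F$ inside $\mathcal{O}$, so one may extend $F$ outside $\mathcal{O}$ so that Assumption~\ref{hyp F2 qsd} is in force.

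A preliminary step is to establish an a priori bound $\limsup_{\gamma\to\infty}\gamma\lambda_0^{(\gamma)}<\infty$. Specializing the above identity to $\phi\equiv 1$ yields $\mathrm{e}^{-\gamma\lambda_0^{(\gamma)} T}=\mathbb{P}_{\mu^{(\gamma)}}(\tau^{(\gamma)}_\partial>\gamma T)$. The event $\{\tau^{(\gamma)}_\partial>\gamma T\}$ coincides with the event that the trajectory $(q^{(\gamma)}_{\gamma s})_{s\in[0,T]}$ stays in $\mathcal{O}$, and Lemma~\ref{borne L1}\eqref{borne 1} allows us to compare this to the corresponding survival event for the overdamped process $\overline{q}$. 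A uniform lower bound on the latter (for $T>0$ small enough and uniformly over the positions of $\mu^{(\gamma)}$, which lie in the bounded set $\mathcal{O}$) then yields the desired upper bound on $\gamma\lambda_0^{(\gamma)}$.

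For \emph{(iii)}, I would apply the QSD identity with $\phi(q,p)=|p|$ and $t=\gamma T$. From~\eqref{vitesse gamma T} and boundedness of (the extended) $F$,
\begin{equation*}
    \mathbb{E}_x\!\left[|p^{(\gamma),x}_{\gamma T}|\right]\leq |p|\,\mathrm{e}^{-\gamma^2 T}+\frac{\|F\|_\infty}{\gamma}+\mathbb{E}\!\left[|Y_T^{(\gamma)}|\right],
\end{equation*}
the last term being uniformly bounded since $Y_T^{(\gamma)}\sim\mathcal{N}_d(0,\beta^{-1}(1-\mathrm{e}^{-2\gamma^2T})I_d)$. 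Integrating against $\mu^{(\gamma)}$ and multiplying by $\mathrm{e}^{\gamma\lambda_0^{(\gamma)} T}$, the prefactor $\mathrm{e}^{\gamma\lambda_0^{(\gamma)} T-\gamma^2 T}$ tends to $0$ by the preliminary step, so that the $\int|p|\psi^{(\gamma)}$ contribution on the right-hand side can be absorbed into the left-hand side, producing a uniform bound and hence \emph{(iii)}.

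For \emph{(ii)} and \emph{(i)}, I would use the eigenfunction representation
\begin{equation*}
    \psi^{(\gamma)}(y)=\mathrm{e}^{\gamma\lambda_0^{(\gamma)} T}\int_D p^{(\gamma)}_{\gamma T}(x,y)\,\psi^{(\gamma)}(x)\,\mathrm{d}x,
\end{equation*}
where $p^{(\gamma)}_{\gamma T}$ is the sub-Markov transition kernel of the killed Langevin process. Integrating $y$ first over momentum removes the Gaussian singularity and reduces \emph{(ii)} to a uniform bound on the position marginal of this kernel, which can be compared via Lemma~\ref{borne L1} to the overdamped heat kernel on $\mathcal{O}$ (bounded since $\mathcal{O}$ is bounded). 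Plugging \emph{(ii)} back into the representation and using Gaussian-type upper bounds on $p^{(\gamma)}_{\gamma T}(x,\cdot)$ — uniform in $\gamma$ thanks to the explicit conditional Gaussianity of $p^{(\gamma),x}_{\gamma T}$ given $(q^{(\gamma),x}_{\gamma s})_{s\in[0,T]}$ already used in Theorem~\ref{cv loi indep} and the coupling with the perturbed overdamped process $w^{(\gamma),q}$ in~\eqref{Overdamed Langevin 2} — then yields the pointwise bound \emph{(i)}. The main obstacle is precisely deriving these uniform-in-$\gamma$ kernel estimates in the hypoelliptic setting and reconciling them with the absorption at the boundary $\partial\mathcal{O}\times\mathbb{R}^d$.
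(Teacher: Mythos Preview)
Your argument for \emph{(iii)} via the Duhamel formula~\eqref{vitesse gamma T} and an absorption trick is different from and more elementary than the paper's route, which instead bounds the killed density $\mathrm p^D_{\gamma}$ above by the explicit free-force Gaussian $\widehat{\mathrm p}^{(\alpha)}_{\gamma}$ (Theorem~\ref{borne densite thm qsd}, Remark~\ref{rk:gaussian ub gamma}) and then estimates $\iint_D|p'|\,\widehat{\mathrm p}^{(\alpha)}_\gamma((q,p),\cdot)\,\mathrm dq'\mathrm dp'$ directly, exploiting the confinement $q'\in\mathcal O$ to kill the $|p|\mathrm e^{-\gamma^2}$ contribution uniformly in $p$. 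Your absorption step, however, presupposes $\iint_D|p|\,\mu^{(\gamma)}(\mathrm dq\,\mathrm dp)<\infty$ for each large $\gamma$ (otherwise one subtracts $\infty$ from $\infty$); this is not immediate from $\psi^{(\gamma)}\in\mathcal C^b(\overline D)$ alone and needs a separate justification.

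There are more serious gaps in the preliminary step and in \emph{(i)}--\emph{(ii)}. For $\limsup\gamma\lambda_0^{(\gamma)}<\infty$, the overdamped survival probability $\mathbb P_q(\overline\tau_\partial>T)$ is \emph{not} uniformly bounded below over $q\in\mathcal O$ (it vanishes as $q\to\partial\mathcal O$), and the error in Lemma~\ref{borne L1}\emph{\eqref{borne 1}} grows with $|p|$; since you have no a priori control preventing $\mu^{(\gamma)}$ from concentrating near $\partial\mathcal O$ or at large momenta, averaging against $\mu^{(\gamma)}$ gives nothing. The paper (Proposition~\ref{bornitude lambda}) bypasses this by fixing a compact $K=\mathrm B(q_0,r)\times\mathrm B(0,M)\subset D$, proving $\inf_{x\in K}\mathbb P(X^{(\gamma),x}_\gamma\in K,\tau^{(\gamma),x}_\partial>\gamma)\geq c>0$ uniformly in large $\gamma$, and then using the QSD identity restricted to $K$ to get $c\,\mathrm e^{\lambda_0^{(\gamma)}\gamma}\mu^{(\gamma)}(K)\leq\mu^{(\gamma)}(K)$, which cancels $\mu^{(\gamma)}(K)$ without knowing its value. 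For \emph{(i)} and \emph{(ii)}, coupling bounds such as Lemma~\ref{borne L1} control differences of trajectories in $L^1$ but say nothing about pointwise transition densities; the step you correctly flag as ``the main obstacle'' is exactly where the paper invokes the hypoelliptic Gaussian upper bound $\mathrm p^D_{\gamma t}(x,y)\leq C\,\widehat{\mathrm p}^{(\alpha)}_{\gamma t}(x,y)$ from~\cite{LelRamRey}, after which \emph{(i)} and \emph{(ii)} reduce to elementary estimates on the explicit Gaussian law~\eqref{loi gaussienne} (Lemma~\ref{densite prop}). Without an analogous density bound, your sketch for \emph{(i)}--\emph{(ii)} does not close.
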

  
\begin{corollary}[Tightness]\label{tightness} Under Hypotheses~\ref{hyp F1 qsd} and~\ref{hyp O qsd}, the sequence of probability measures $(\mu^{(\gamma_n)})_{n\geq1}$ is tight. 
\end{corollary}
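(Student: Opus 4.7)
The plan is to reduce the tightness criterion entirely to the tail control on the velocity variable provided by Proposition~\ref{properties psi gamma}(iii). By Assumption~\ref{hyp O qsd}, the closure $\overline{\mathcal{O}}$ is compact in $\mathbb{R}^d$, so each $\mu^{(\gamma_n)}$ may be viewed as a Borel probability measure on the Polish space $\overline{D} = \overline{\mathcal{O}} \times \mathbb{R}^d$, whose compact subsets are (up to closure) of the form $\overline{\mathcal{O}} \times \overline{B}(0,R)$ for some $R > 0$. Tightness of the sequence then reduces to finding, for each $\varepsilon > 0$, a radius $R > 0$ such that $\mu^{(\gamma_n)}(\overline{\mathcal{O}}\times\{|p|>R\}) \leq \varepsilon$ for every $n \geq 1$.

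Next I would apply Markov's inequality to the density $\psi^{(\gamma_n)}$ in the momentum variable, noting that $\mu^{(\gamma_n)}$ is supported in $\mathcal{O}\times\mathbb{R}^d$:
$$\mu^{(\gamma_n)}\bigl(\overline{\mathcal{O}}\times\{|p|>R\}\bigr) \;=\; \iint_{\mathcal{O}\times\{|p|>R\}} \psi^{(\gamma_n)}(q,p)\,\mathrm{d}q\,\mathrm{d}p \;\leq\; \frac{1}{R}\iint_D |p|\,\psi^{(\gamma_n)}(q,p)\,\mathrm{d}q\,\mathrm{d}p.$$
By Proposition~\ref{properties psi gamma}(iii), the integral on the right-hand side is bounded by some constant $C$ independent of $n$ for all $n$ sufficiently large; the finitely many remaining indices are handled by taking $R$ larger if necessary, since any single probability measure on a Polish space is itself tight. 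Choosing $R$ as a function of $\varepsilon$ and $C$ then yields the desired uniform bound.

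I do not expect any real obstruction at this stage: the substantive analysis is entirely contained in Proposition~\ref{properties psi gamma}, and the corollary is essentially a one-line consequence via Markov's inequality combined with the compactness of $\overline{\mathcal{O}}$. The only mild care needed is in the choice of ambient Polish space, so that the compact sets used in Prokhorov's criterion include the closure of the position domain, rather than compact subsets of $D$ itself (which would require additional control of $\psi^{(\gamma_n)}$ near $\partial \mathcal{O}$ that is not needed here).
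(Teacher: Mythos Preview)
Your argument is correct as a proof of tightness on the closed space $\overline{D}=\overline{\mathcal{O}}\times\mathbb{R}^d$, and it is indeed more elementary than the paper's: you only invoke Proposition~\ref{properties psi gamma}(iii) together with Markov's inequality in the velocity variable, whereas the paper works on the open domain $D$ itself, using compact sets of the form $K_k=\{(q,p)\in D:\vert p\vert\leq k,\ \mathrm{d}_\partial(q)\geq 1/k\}$ and controlling both the large-$|p|$ tail (via (iii), exactly as you do) \emph{and} the mass near $\partial\mathcal{O}$ (via (ii)). So the two proofs differ precisely in the ambient Polish space, and correspondingly in which parts of Proposition~\ref{properties psi gamma} are used.

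The trade-off is that your conclusion is weaker: tightness on $\overline{D}$ only guarantees, via Prokhorov, subsequential limits $\mu'$ on $\overline{D}$, possibly charging $\partial\mathcal{O}\times\mathbb{R}^d$. The paper's proof of Theorem~\ref{cv etroite} explicitly takes $\mu'$ to be a probability measure on $D$ and then identifies its position marginal with the QSD $\overline{\mu}$ on $\mathcal{O}$; with your version one would need an extra step to rule out boundary mass. So your remark that the control near $\partial\mathcal{O}$ ``is not needed here'' is true for the tightness statement on $\overline{D}$, but that control \emph{is} exactly what the paper uses to land directly on $D$, and this is what makes the downstream identification go through without further argument.
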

\begin{proof}
Recall that for any $n\geq1$, $\mu^{(\gamma_n)}$ is supported in $D$. For $k\geq1$, let $K_k$ be the compact subset of $D$ defined by   $$K_k:=\left\{(q,p)\in D : \vert p\vert\leq k, \mathrm{d}_\partial(q)\geq\frac{1}{k} \right\},$$
where $\mathrm{d}_\partial$ is the Euclidean distance to the boundary $\partial\mathcal{O}$. Let $K_k^c:=D\setminus K_k=\{(q,p)\in D : \vert p\vert> k\}\cup\{(q,p)\in D : \mathrm{d}_\partial(q)<\frac{1}{k}\}$. Let us prove the following limit
\begin{equation}\label{tightness estimate}
    \lim_{k\rightarrow\infty}\limsup_{n\rightarrow\infty}\mu^{(\gamma_n)}(K_k^c)=0, 
\end{equation} which immediately yields the required tightness.  

Let $\mathcal{O}_{k}:=\{q\in\mathcal{O} : \mathrm{d}_\partial(q)<\frac{1}{k}\}$. For all $n\geq1$,
\begin{align*}
     \mu^{(\gamma_n)}(K_k^c) 
    &\leq\iint_{D\cap\{\vert p\vert>k\}}\psi^{(\gamma_n)}(q,p) \mathrm{d}p \mathrm{d}q+\iint_{D\cap\{\mathrm{d}_\partial(q)<\frac{1}{k}\}}\psi^{(\gamma_n)}(q,p) \mathrm{d}p \mathrm{d}q\\
    &\leq\iint_{D\cap\{\vert p\vert>k\}}\psi^{(\gamma_n)}(q,p) \frac{\vert p\vert}{k} \mathrm{d}p \mathrm{d}q+\int_{\mathrm{d}_\partial(q)<\frac{1}{k}}\left(\int_{\mathbb{R}^d}\psi^{(\gamma_n)}(q,p) \mathrm{d}p\right) \mathrm{d}q\\
    &\leq\frac{\iint_{D}\psi^{(\gamma_n)}(q,p) \vert p\vert \mathrm{d}p \mathrm{d}q}{k}+\vert\mathcal{O}_k\vert \sup_{q\in\mathcal{O}}\int_{\mathbb{R}^d}\psi^{(\gamma_n)}(q,p) \mathrm{d}p .
\end{align*}
The convergence~\eqref{tightness estimate} then follows from Proposition~\ref{properties psi gamma}, which concludes the proof.
\end{proof}
Last, we state and prove here the following lemma which is used later in the proof of Theorem~\ref{cv etroite}.
\begin{lemma}[Convergence in distribution]\label{lem:cv loi discont}
Let Assumptions~\ref{hyp F2 qsd} and~\ref{hyp O qsd} hold. Let $f\in\mathcal{C}^b(\mathcal{O})$, $g\in\mathcal{C}^b(\mathbb{R}^d)$. For all $(q,p)\in D$ and $t>0$,
\begin{equation}\label{eq:cont distrib disc}
    \mathbb{E}\left[f(q^{(\gamma),(q,p)}_{\gamma t}) g(p^{(\gamma),(q,p)}_{\gamma t}) \mathbb{1}_{\tau^{(\gamma),(q,p)}_\partial>\gamma t}\right]\underset{\gamma\rightarrow\infty}{\longrightarrow}\mathbb{E}\left[f(\overline{q}^q_t) \mathbb{1}_{\overline{\tau}^{q}_\partial>t}\right] \mathbb{E}\left[g(Z)\right].
\end{equation}
\end{lemma}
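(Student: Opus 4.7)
The plan is to recognize the expectation in~\eqref{eq:cont distrib disc} as the integral of a bounded functional of the joint random object $((q^{(\gamma),(q,p)}_{\gamma s})_{s\in[0,t]}, p^{(\gamma),(q,p)}_{\gamma t})$ whose weak limit is furnished by Theorem~\ref{cv loi indep}, and to conclude by a Portmanteau-type argument after checking that the functional is continuous almost surely under the limit law. Since $\{\tau^{(\gamma),(q,p)}_\partial > \gamma t\} = \{q^{(\gamma),(q,p)}_{\gamma s} \in \mathcal{O},\ \forall s\in[0,t]\}$, I introduce the bounded measurable map
\[
\Phi:\mathcal{C}([0,t],\mathbb{R}^d)\times\mathbb{R}^d\to\mathbb{R}, \qquad \Phi(\omega,z):=f(\omega(t))\,g(z)\,\mathbb{1}_{\{\omega([0,t])\subset\mathcal{O}\}},
\]
with the convention that $f(\omega(t))$ is set to $0$ when $\omega(t)\notin\mathcal{O}$ (a choice made immaterial by the indicator). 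The left-hand side of~\eqref{eq:cont distrib disc} then equals $\mathbb{E}\bigl[\Phi\bigl((q^{(\gamma),(q,p)}_{\gamma s})_{s\in[0,t]}, p^{(\gamma),(q,p)}_{\gamma t}\bigr)\bigr]$, and by Theorem~\ref{cv loi indep} the pair converges weakly to $((\overline{q}^q_s)_{s\in[0,t]},Z)$.

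Next I identify the continuity points of $\Phi$. If $\omega([0,t])\Subset\mathcal{O}$, the indicator equals $1$ on a sup-norm neighborhood of $\omega$ and $f$ is continuous at $\omega(t)\in\mathcal{O}$, so $\Phi$ is continuous at $(\omega,z)$; if some $\omega(s^\star)\in\mathbb{R}^d\setminus\overline{\mathcal{O}}$, the indicator vanishes on a neighborhood and $\Phi$ is again continuous there. Hence the discontinuity set is contained in $D_0\times\mathbb{R}^d$, where $D_0$ denotes the set of paths that remain in $\overline{\mathcal{O}}$ while touching $\partial\mathcal{O}$ at least once on $[0,t]$. To show that this set has zero mass under the limit law, set $\sigma:=\inf\{s>0:\overline{q}^q_s\in\partial\mathcal{O}\}$; a path belongs to $D_0$ only if $\sigma\leq t$ and $\overline{q}^q_s\in\overline{\mathcal{O}}$ for all $s\in[\sigma,t]$. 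The strong Markov property at $\sigma$, combined with the fact that, under Assumptions~\ref{hyp F2 qsd} and~\ref{hyp O qsd}, every point of the $\mathcal{C}^2$ boundary $\partial\mathcal{O}$ is regular for $\mathbb{R}^d\setminus\overline{\mathcal{O}}$ relative to the non-degenerate diffusion~\eqref{eq:ovLangevin_intro} (so that the process started on $\partial\mathcal{O}$ leaves $\overline{\mathcal{O}}$ in any positive time, a.s., thanks to the exterior ball condition), then gives $\mathbb{P}(D_0)=0$.

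Applying the Portmanteau theorem for bounded measurable functionals continuous almost everywhere under the limit,
\[
\mathbb{E}\bigl[\Phi\bigl((q^{(\gamma),(q,p)}_{\gamma s})_{s\in[0,t]},p^{(\gamma),(q,p)}_{\gamma t}\bigr)\bigr]\underset{\gamma\to\infty}{\longrightarrow}\mathbb{E}\bigl[\Phi\bigl((\overline{q}^q_s)_{s\in[0,t]},Z\bigr)\bigr].
\]
Because $Z$ is independent of the path $(\overline{q}^q_s)_{s\in[0,t]}$, and $\mathbb{1}_{\{\omega([0,t])\subset\mathcal{O}\}}$ evaluated along $(\overline{q}^q_s)$ coincides with $\mathbb{1}_{\overline{\tau}^q_\partial>t}$, the right-hand side factors as $\mathbb{E}[f(\overline{q}^q_t)\mathbb{1}_{\overline{\tau}^q_\partial>t}]\,\mathbb{E}[g(Z)]$, yielding~\eqref{eq:cont distrib disc}.

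The main obstacle is the boundary-regularity step used to show $\mathbb{P}(D_0)=0$: one must rule out overdamped paths that "graze" $\partial\mathcal{O}$ without exiting. This is a classical consequence of the non-degeneracy of~\eqref{eq:ovLangevin_intro} and the $\mathcal{C}^2$ smoothness of $\partial\mathcal{O}$, but it is the only non-routine input; the rest of the argument is bookkeeping built on Theorem~\ref{cv loi indep} and the Portmanteau theorem.
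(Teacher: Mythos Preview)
Your proof is correct and follows essentially the same route as the paper: express the expectation as a bounded functional of the pair $((q^{(\gamma),(q,p)}_{\gamma s})_{s\in[0,t]},p^{(\gamma),(q,p)}_{\gamma t})$, identify the discontinuity set as paths that touch $\partial\mathcal{O}$ without leaving $\overline{\mathcal{O}}$, show this set is null under the limit law via the strong Markov property and boundary regularity, and conclude by the continuous mapping (Portmanteau) theorem. The only cosmetic difference is that the paper cites Friedman for the instantaneous exit from $\partial\mathcal{O}$ where you invoke the exterior ball condition; this is the same classical fact.
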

\begin{proof}Let $(q,p)\in D$ and $T>0$. Since $\mathcal{O}$ is an open set, we have for any $\gamma>0$,
\begin{equation*}
  f(q^{(\gamma),(q,p)}_{\gamma T})g(p^{(\gamma),(q,p)}_{\gamma T}) \mathbb{1}_{\tau^{(\gamma),(q,p)}_\partial>\gamma T} = \Phi\left( (q^{(\gamma),(q,p)}_{\gamma t})_{t \in [0,T]}, p^{(\gamma),(q,p)}_{\gamma T}\right),
\end{equation*}
where $\Phi : \mathcal{C}([0,T],\mathbb{R}^d)\times\mathbb{R}^d \to \mathbb{R}$ is defined by
\begin{equation*}
  \Phi\left( (q_t)_{t \in [0,T]},z\right) = f(q_T)g(z)\mathbb{1}_{\inf_{t \in [0,T]} \mathrm{d}_\partial(q_t)>0},
\end{equation*}
and we take the convention that $\mathrm{d}_\partial(q')=0$ if $q' \not\in \mathcal{O}$. The functional $\Phi$ is not continuous on the space $\mathcal{C}([0,T],\mathbb{R}^d)\times\mathbb{R}^d$, which prevents us from applying Theorem~\ref{cv loi indep} directly. Indeed, take for example a continuous trajectory $(q_t)_{t\in[0,T]}$ on $[0,T]$ which hits the boundary $\partial\mathcal{O}$ and is reflected back into the domain $\mathcal{O}$. One can construct a sequence of functions $((q^{(n)}_t)_{t\in[0,T]})_{n\geq1}$ converging in the supremum norm to $(q_t)_{t\in[0,T]}$ such that for all $n\geq1$, $\inf_{t \in [0,T]} \mathrm{d}_\partial(q^{(n)}_t)>0$. As a result, $(q_t)_{t\in[0,T]}$ is an example of a discontinuity point of the function $\Phi$.

The discontinuity points of $\Phi$ are contained in the set of discontinuity points of $\mathbb{1}_{\inf_{t \in [0,T]} \mathrm{d}_\partial(q_t)>0}$, which can be characterized as follows. They correspond to the trajectories $(q_t)_{t\in [0,T]}$ which hit the boundary and remain on the boundary $\partial\mathcal{O}$ or come back inside $\mathcal{O}$. In fact if $(q_t)_{t\in [0,T]}$ is such that $\inf_{t\in[0,T]}\mathrm{d}_\partial(q_t)>0$ or $\sup_{t\in[0,T]}\mathrm{dist}(q_t,\mathbb{R}^d\setminus\overline{\mathcal{O}})>0$, then taking a sequence of functions $(q^{(n)}_t)_{t\in[0,T]}$ in $\mathcal{C}([0,T],\mathbb{R}^d)$ such that $\Vert q^{(n)}-q\Vert_\infty\leq\frac{\inf_{t\in[0,T]}\mathrm{d}_\partial(q_t)}{2}$ or $\Vert q^{(n)}-q\Vert_\infty\leq\frac{\sup_{t\in[0,T]}\mathrm{dist}(q_t,\mathbb{R}^d\setminus\overline{\mathcal{O}})}{2}$ then it follows from the $1-$Lipschitz continuity of the Euclidean distances $\mathrm{d}_\partial(\cdot)$ and $\mathrm{dist}(\cdot,\mathbb{R}^d\setminus\overline{\mathcal{O}})$ that 
\begin{equation*}
  \mathbb{1}_{\inf_{t \in [0,T]} \mathrm{d}_\partial(q^{(n)}_t)>0} \underset{n\rightarrow\infty}{\longrightarrow} \mathbb{1}_{\inf_{t \in [0,T]} \mathrm{d}_\partial(q_t)>0}.
\end{equation*}
As a consequence, the set of discontinuities of $\Phi$ is included in the set $S$ of continuous trajectories $(q_t)_{t \in [0,T]}$ such that there exists $t_\partial \in [0,T]$ for which $q_{t_\partial} \in \partial \mathcal{O}$ but for all $t \in [0,T]$, $q_t \in \overline{\mathcal{O}}$. Let us now justify that for all $q \in \mathcal{O}$, $\mathbb{P}((\overline{q}^q_t)_{t \in [0,T]}\in S)=0$. 
  Using the strong Markov property at $\overline{\tau}^q_\partial$, this is the case if for all $q\in\partial\mathcal{O}$, $\mathbb{P}(\overline{\tau}^q_\partial>0)=0$. This is clearly the case since for all $t>0$, $q\in\partial\mathcal{O}$, $\mathbb{P}(\overline{\tau}^q_\partial\leq t)=1$, see~\cite[p. 347]{FF}. Thus, the continuous mapping theorem ensures that $\Phi( (q^{(\gamma),(q,p)}_{\gamma t})_{t \in [0,T]}, p^{(\gamma),(q,p)}_{\gamma t})$ converges in distribution to $$\Phi\left( (\overline{q}^q_t)_{t \in [0,T]}, Z\right) = \mathbb{E}\left[f(\overline{q}^q_t) \mathbb{1}_{\overline{\tau}^{q}_\partial>t}\right] \mathbb{E}\left[g(Z)\right],$$ which completes the proof.
\end{proof}
 
\begin{proof}[Proof of Theorem~\ref{cv etroite}]

Notice that since the QSD $\mu^{(\gamma)}$ does not depend on the values of $F$ outside of~$\mathcal{O}$, we can consider here, up to a modification of $F$ outside of $\mathcal{O}$, that $F$ satisfies Assumption~\ref{hyp F2 qsd}. Therefore, the result of Theorem~\ref{cv loi indep} still applies in the current setting.  

By Corollary~\ref{tightness}, the sequence $(\mu^{(\gamma_n)})_{n\geq1}$ is tight, and therefore it is sequentially compact by Prokhorov's theorem. Let us consider a subsequence $(\gamma'_n)_{n\geq1}$ such that the sequence $(\mu^{(\gamma'_n)})_{n\geq1}$ converges weakly to a probability measure $\mu'$ on $D$ when $n$ goes to infinity. Let us now prove that $\mu'=\mu^{(\infty)}$ defined in~\eqref{mesure limite qsd} whatever the sequence $(\gamma'_n)_{n\geq1}$, which will conclude the proof. 

By Definition~\ref{def QSD 1} of a QSD, one easily deduce that for all $f\in\mathcal{C}^b(\mathcal{O})$, $g\in\mathcal{C}^b(\mathbb{R}^d)$ and all $t>0$,
\begin{align} 
&\iint_{D}\mu^{(\gamma'_n)}(\mathrm{d}q\mathrm{d}p) \mathbb{E}\left[f(q^{(\gamma'_n),(q,p)}_{\gamma'_n t}) g(p^{(\gamma'_n),(q,p)}_{\gamma'_n t}) \mathbb{1}_{\tau^{(\gamma'_n),(q,p)}_\partial>\gamma'_n t}\right]\nonumber\\
&=\mathrm{e}^{-\lambda_0^{(\gamma'_n)}\gamma'_n  t} \underbrace{\iint_D f(q) g(p) \mu^{(\gamma'_n)}(\mathrm{d}q\mathrm{d}p) }_{\underset{n\rightarrow \infty}{{\longrightarrow}}\iint_D f(q) g(p) \mu'(\mathrm{d}q\mathrm{d}p)},\label{ecriture qsd} 
\end{align}
where $\tau^{(\gamma'_n),(q,p)}_\partial$ denotes the exit time from $D$ for the process $(X^{(\gamma'_n),(q,p)}_t)_{t \geq 0}$.

Let $Z\sim\mathcal{N}_{d}(0,\beta^{-1} I_d )$ be a Gaussian vector independent of the process $(\overline{q}^q_t)_{t\in[0,T]}$ defined in~\eqref{eq:ovLangevin_intro}. Let us prove that the term in the left-hand side of the equality~\eqref{ecriture qsd} converges to $\iint_{D} \mathbb{E}[f(\overline{q}^q_t) \mathbb{1}_{\overline{\tau}^{q}_\partial>t}] \mathbb{E}[g(Z)]\mu'(\mathrm{d}q\mathrm{d}p)$. Considering the difference between the term in the left-hand side of the equality~\eqref{ecriture qsd} and  $\iint_{D} \mathbb{E}[f(\overline{q}^q_t) \mathbb{1}_{\overline{\tau}^{q}_\partial>t}] \mathbb{E}[g(Z)]\mu^{(\gamma'_n)}(\mathrm{d}q\mathrm{d}p)$ and partitioning the set $\{p\in\mathbb{R}^d\}$ into $\{\vert p\vert\leq K\}$ and $\{\vert p\vert> K\}$ for $K>0$, one obtains
\begin{align*}
    &\left\vert\iint_{D}\mu^{(\gamma'_n)}(\mathrm{d}q\mathrm{d}p) \left(\mathbb{E}\left[f(q^{(\gamma'_n),(q,p)}_{\gamma'_n t}) g(p^{(\gamma'_n),(q,p)}_{\gamma'_n t}) \mathbb{1}_{\tau^{(\gamma'_n),(q,p)}_\partial>\gamma'_n t}\right]-\mathbb{E}\left[f(\overline{q}^q_t) \mathbb{1}_{\overline{\tau}^{q}_\partial>t}\right] \mathbb{E}\left[g(Z)\right]\right)\right\vert\\
    &=\left\vert\iint_{D}\psi^{(\gamma'_n)}(q,p) \left(\mathbb{E}\left[f(q^{(\gamma'_n),(q,p)}_{\gamma'_n t}) g(p^{(\gamma'_n),(q,p)}_{\gamma'_n t}) \mathbb{1}_{\tau^{(\gamma'_n),(q,p)}_\partial>\gamma'_n t}\right]-\mathbb{E}\left[f(\overline{q}^q_t) \mathbb{1}_{\overline{\tau}^{q}_\partial>t}\right] \mathbb{E}\left[g(Z)\right]\right) \mathrm{d}p \mathrm{d}q\right\vert\\
    &\leq\big\Vert\psi^{(\gamma'_n)}\big\Vert_\infty\iint_{\mathcal{O}\times\{\vert p\vert\leq K\}}\left\vert\underbrace{\mathbb{E}\left[f(q^{(\gamma'_n),(q,p)}_{\gamma'_n t}) g(p^{(\gamma'_n),(q,p)}_{\gamma'_n t}) \mathbb{1}_{\tau^{(\gamma'_n),(q,p)}_\partial>\gamma'_n t}\right]-\mathbb{E}\left[f(\overline{q}^q_t) \mathbb{1}_{\overline{\tau}^{q}_\partial>t}\right] \mathbb{E}\left[g(Z)\right]}_{\underset{n\rightarrow\infty}{\longrightarrow}0\text{ by Lemma}~\ref{lem:cv loi discont}}\right\vert \mathrm{d}p \mathrm{d}q\\
    &+2 \Vert f\Vert_\infty \Vert g\Vert_\infty\iint_{\mathcal{O}\times\{\vert p\vert> K\}}\psi^{(\gamma'_n)}(q,p) \mathrm{d}p \mathrm{d}q .
\end{align*}  
Therefore, using Proposition~\ref{properties psi gamma} $(i)$ and the dominated convergence theorem to get that the limsup of the first term in the right-hand side is zero, 
\begin{align*}
    &\limsup_{n\rightarrow\infty}\left\vert\iint_{D}\mu^{(\gamma'_n)}(\mathrm{d}q\mathrm{d}p) \left(\mathbb{E}\left[f(q^{(\gamma'_n),(q,p)}_{\gamma'_n t}) g(p^{(\gamma'_n),(q,p)}_{\gamma'_n t}) \mathbb{1}_{\tau^{(\gamma'_n),(q,p)}_\partial>\gamma'_n t}\right]-\mathbb{E}\left[f(\overline{q}^q_t) \mathbb{1}_{\overline{\tau}^{q}_\partial>t}\right] \mathbb{E}\left[g(Z)\right]\right)\right\vert\\ 
    &\leq 2 \Vert f\Vert_\infty \Vert g\Vert_\infty\limsup_{n\rightarrow\infty}\iint_{\mathcal{O}\times\{\vert p\vert> K\}}\psi^{(\gamma'_n)}(q,p) \mathrm{d}p \mathrm{d}q\\
    &\leq 2 \Vert f\Vert_\infty \Vert g\Vert_\infty \limsup_{n\rightarrow\infty}\iint_{\mathcal{O}\times\{\vert p\vert> K\}}\psi^{(\gamma'_n)}(q,p) \frac{\vert p\vert}{K} \mathrm{d}p \mathrm{d}q\\
    &\leq \frac{2 \Vert f\Vert_\infty \Vert g\Vert_\infty}{K} \limsup_{n\rightarrow\infty}\iint_{D}\psi^{(\gamma'_n)}(q,p) \vert p\vert \mathrm{d}p \mathrm{d}q\underset{K\rightarrow\infty}{\longrightarrow}0,
\end{align*}
using Proposition~\ref{properties psi gamma} $(iii)$. 

Consequently,
$$\iint_{D}\mu^{(\gamma'_n)}(\mathrm{d}q\mathrm{d}p) \left(\mathbb{E}\left[f(q^{(\gamma'_n),(q,p)}_{\gamma'_n t}) g(p^{(\gamma'_n),(q,p)}_{\gamma'_n t}) \mathbb{1}_{\tau^{(\gamma'_n),(q,p)}_\partial>\gamma'_n t}\right]-\mathbb{E}\left[f(\overline{q}^q_t) \mathbb{1}_{\overline{\tau}^{q}_\partial>t}\right] \mathbb{E}\left[g(Z)\right]\right)\underset{n\rightarrow\infty}{\longrightarrow}0.$$
In addition,
\begin{align*}
     \iint_{D}\mu^{(\gamma'_n)}(\mathrm{d}q\mathrm{d}p) \mathbb{E}\left[f(\overline{q}^q_t) \mathbb{1}_{\overline{\tau}^{q}_\partial>t}\right] \mathbb{E}\left[g(Z)\right] 
    &=\mathbb{E}\left[g(Z)\right] \int_{\mathcal{O}}\mathbb{E}\left[f(\overline{q}^q_t) \mathbb{1}_{\overline{\tau}^{q}_\partial>t}\right] \left(\int_{p\in\mathbb{R}^d}\mu^{(\gamma'_n)}(\mathrm{d}q\mathrm{d}p)\right)\\
    &\underset{n\rightarrow\infty}{\longrightarrow}\mathbb{E}\left[g(Z)\right] \int_{\mathcal{O}}\mathbb{E}\left[f(\overline{q}^q_t) \mathbb{1}_{\overline{\tau}^{q}_\partial>t}\right] \left(\int_{p\in\mathbb{R}^d}\mu'(\mathrm{d}q \mathrm{d}p)\right), 
\end{align*}
since $q\in\mathcal{O}\mapsto\mathbb{E}\left[f(\overline{q}^q_t) \mathbb{1}_{\overline{\tau}^{q}_\partial>t}\right]$ is a bounded and continuous function on $\mathcal{O}$, see~\cite[Theorem 6.5.2]{F}. Consequently, taking $n\rightarrow\infty$ in the left-hand side of the equation~\eqref{ecriture qsd} and choosing $t=1$, it follows that $\lambda_0^{(\gamma'_n)}\gamma'_n$ converges to a value $\lambda'\in[0,\infty)$. Hence, taking $n\rightarrow\infty$ again in Equation~\eqref{ecriture qsd}, it follows that for all $t>0$,
\begin{equation}\label{eq2}
    \mathbb{E}\left[g(Z)\right] \int_{\mathcal{O}}\mathbb{E}\left[f(\overline{q}^q_t) \mathbb{1}_{\overline{\tau}^{q}_\partial>t}\right] \left(\int_{p\in\mathbb{R}^d}\mu'(\mathrm{d}q\mathrm{d}p)\right) =\mathrm{e}^{-\lambda't}\iint_Df(q)g(p)\mu'(\mathrm{d}q \mathrm{d}p).
\end{equation}
 Let $\mu'_\mathcal{O}$ be the probability measure on $\mathcal{O}$ defined by: $$\mu'_\mathcal{O}(\mathrm{d}q) = \int_{p \in \mathbb{R}^d} \mu' (\mathrm{d}q \mathrm{d}p).$$ Taking $g=1$ and $f=1$ in~\eqref{eq2}, we obtain that $\mathbb{P}_{\mu'_\mathcal{O}} (\overline{\tau}_\partial>t) = \exp(- \lambda' t)$. Since the equality can also be extended to all functions $f\in\mathrm{L}^\infty(\mathcal{O})$, using the density of $\mathcal{C}^b(\mathcal{O})$ in $\mathrm{L}^\infty(\mathcal{O})$, one gets for $g=1$ and $f=\mathbb{1}_{A}$ in~\eqref{eq2} with $A\in\mathcal{B}(\mathcal{O})$, 
$$\frac{\mathbb{P}_{\mu'_\mathcal{O}}(\overline{q}_t\in A, \overline{\tau}_\partial>t)}{\mathbb{P}_{\mu'_\mathcal{O}}(\overline{\tau}_\partial>t)}=\mu'_\mathcal{O}(A).$$ Therefore, $\mu'_\mathcal{O}$ is the unique QSD on $\mathcal{O}$ of $(\overline{q}_t)_{t\geq0}$ by Theorem~\ref{qsd overdamped}, which admits the density $\overline{\psi}$ with respect to the Lebesgue measure on $\mathcal{O}$. In particular, one has that $\lambda'=\overline{\lambda}_0$. Finally, reinjecting this equality into~\eqref{eq2}, we obtain that $\mu'$ satisfies the equality~\eqref{mesure limite qsd} since $Z\sim \mathcal{N}_d(0,\beta^{-1} I_d)$, which concludes the proof. 
\end{proof}
\subsection{Proofs of the technical results}\label{Proofs}
This section gathers the proofs of the technical results stated previously: Lemma~\ref{borne L1} and Proposition~\ref{properties psi gamma}.
\subsubsection{Proof of Lemma~\ref{borne L1}}
 
\begin{proof}[Proof of Lemma~\ref{borne L1}]
  
Let $T>0$, $x=(q,p)\in\mathbb{R}^{2d}$. Let us prove \emph{\eqref{borne 1}}. We recall from~\eqref{eq:formintegqgammat} that almost surely, for all $t\in[0,T]$, for all $\gamma>1$,
$$q^{(\gamma),x}_{\gamma t}=q-\frac{p^{(\gamma),x}_{\gamma t}-p}{\gamma}+\int_0^tF(q^{(\gamma),x}_{\gamma s}) \mathrm{d}s+\sqrt{2\beta^{-1}} B^{(\gamma)}_t .$$
Furthermore, by~\eqref{Overdamed Langevin 2}, almost surely, for all $t\in[0,T]$, 
$$w^{(\gamma),q}_{t}=q+\int_0^tF(w^{(\gamma),q}_{s}) \mathrm{d}s+Z_{t,T}^{(\gamma)},$$
where we recall $Z_{t,T}^{(\gamma)}=\sqrt{2\beta^{-1}} B^{(\gamma)}_t-h^{(\gamma)}_T(t) Y_T^{(\gamma)}$, with $Y_T^{(\gamma)}$ defined by~\eqref{def Y_T gamma}. It follows from~\eqref{expr h} that for all $T>0$, $\gamma>0$ and $t\in[0,T]$,
\begin{align*}
    h_T^{(\gamma)}(t) &\leq \frac{2}{\gamma} \frac{1-\mathrm{e}^{-\gamma^2T}}{1-\mathrm{e}^{-2\gamma^2T}}\\
    &\leq\frac{2}{\gamma}.
\end{align*}
Therefore, by Grönwall's Lemma, since $F$ is globally Lipschitz continuous with a Lipschitz coefficient $C_1>0$,
$$\sup_{t\in[0,T]}\left\vert q^{(\gamma),x}_{\gamma t}-w^{(\gamma),q}_{t}\right\vert\leq\left(\frac{\sup_{t\in[0,T]}\left\vert p^{(\gamma),x}_{\gamma t}-p\right\vert}{\gamma}+\frac{2}{\gamma} \left\vert Y_T^{(\gamma)}\right\vert\right) \mathrm{e}^{C_1 T} .$$
Moreover, by~\eqref{eq:duhamel eq vitesse} and~\eqref{def Y_T gamma}, almost surely, for $t\in[0,T]$,  
$$\frac{p^{(\gamma),x}_{\gamma t}-p}{\gamma}=-\frac{1-\mathrm{e}^{-\gamma^2 t}}{\gamma} p+\mathrm{e}^{-\gamma^2 t}\int_0^t\mathrm{e}^{\gamma^2 s} F(q^{(\gamma),x}_{\gamma s}) \mathrm{d}s+\frac{Y_t^{(\gamma)}}{\gamma}.$$
Therefore, since $\gamma>1$,
\begin{equation}\label{sup increment vitesse qsd}
    \mathbb{E}\left[\sup_{t\in[0,T]} \frac{\left\vert p^{(\gamma),x}_{\gamma t}-p\right\vert}{\gamma}\right]\leq \frac{\vert p\vert+\Vert F\Vert_\infty+\mathbb{E}\left[\sup_{t\in[0,T]} \vert Y_t^{(\gamma)}\vert\right]}{\gamma}.
\end{equation}
Let $(H^{(\gamma)}_t=((H^{(\gamma)}_t)_1,\dots,(H^{(\gamma)}_t)_d))_{t\in[0,T]}$ be the strong solution on $\mathbb{R}^d$ of the following Ornstein-Uhlenbeck SDE:
$$\mathrm{d}H^{(\gamma)}_t=-\gamma H^{(\gamma)}_t\mathrm{d}t+\mathrm{d}B_t, \qquad H^{(\gamma)}_0 = 0,$$
then it is easy to see that, almost surely, for $t\in[0,T]$, $Y^{(\gamma)}_t=\sqrt{2\gamma\beta^{-1}}H^{(\gamma)}_{\gamma t}$. Therefore, the Minkowski inequality applied to the Euclidean norm on $\mathbb{R}^d$ of $\vert Y_t^{(\gamma)}\vert$ ensures that
\begin{equation}\label{majoration OU}
    \sup_{t\in[0,T]} \vert Y_t^{(\gamma)}\vert\leq\sqrt{2\gamma\beta^{-1}}\sum_{i=1}^d\sup_{t\in[0,\gamma T]}\vert(H^{(\gamma)}_{t})_i\vert.
\end{equation}
A sharp inequality on the expectation in the summand above is provided in~\cite{IneqOU} and ensures the existence of a universal constant $C_2>0$ such that for all $t\in[0,T]$, $\gamma>0$ and $i\in\llbracket 1,d\rrbracket$,
$$\mathbb{E}\left[\sup_{t\in[0,\gamma T]}\vert(H^{(\gamma)}_{t})_i\vert\right]\leq \frac{C_2}{\sqrt{\gamma}}\sqrt{\log(1+\gamma^2T)}.$$
Reinjecting into~\eqref{majoration OU}, one gets $\mathbb{E}[\sup_{t\in[0,T]} \vert Y_t^{(\gamma)}\vert]\leq dC_2\sqrt{2\beta^{-1}}\sqrt{\log(1+\gamma^2T)}$. Therefore, the inequality~\eqref{sup increment vitesse qsd} ensures the existence of a constant $C_3>0$ such that for all $\gamma>1$, 
\begin{align*}
    \mathbb{E}\left[\sup_{t\in[0,T]} \frac{\left\vert p^{(\gamma),x}_{\gamma t}-p\right\vert}{\gamma}\right]\leq \frac{C_3}{\gamma}\left(1+\vert p\vert+ \sqrt{\log(1+\gamma^2T)}\right).
\end{align*}
Using the Cauchy-Schwarz inequality and the Itô isometry, one easily gets that $\mathbb{E}[\vert Y_T^{(\gamma)}\vert]\leq\sqrt{d\beta^{-1}}$. Therefore, for all $\gamma>1$, $T>0$, $t\in[0,T]$ and $x=(q,p)\in\mathbb{R}^{2d}$,
$$\mathbb{E}\left[\sup_{t\in[0,T]}\left\vert q^{(\gamma),x}_{\gamma t}-w^{(\gamma),q}_{t}\right\vert\right]\leq\frac{C_4}{\gamma}\left(1+\vert p\vert+ \sqrt{\log(1+\gamma^2T)}\right)\mathrm{e}^{C_1T}.$$ 
This concludes the proof of \emph{\eqref{borne 1}} and the proof of \emph{\eqref{borne 2}} also follows from the use of Gronwall's Lemma along with the previous estimates. 
\end{proof}
\subsubsection{Proof of Proposition~\ref{properties psi gamma}}
Let us now prove Proposition~\ref{properties psi gamma}. In order to do so, we resort to the two following results.
\begin{proposition}[Principal eigenvalue]\label{bornitude lambda} Under Hypotheses~\ref{hyp F1 qsd} and~\ref{hyp O qsd},
\begin{equation}\label{borne lambda}
    \limsup_{\gamma\rightarrow\infty} \lambda_0^{(\gamma)}\gamma<\infty.
\end{equation} 
\end{proposition}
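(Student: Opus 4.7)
The plan is to establish that there exist $T>0$ and $c>0$ (both independent of $\gamma$) such that, for all $\gamma$ large enough,
$$\mathbb{P}_{\mu^{(\gamma)}}\left(\tau_\partial^{(\gamma)}>\gamma T\right)\geq c.$$
By Theorem~\ref{thm:qsd langevin}, the left-hand side equals $\mathrm{e}^{-\lambda_0^{(\gamma)}\gamma T}$, so such a bound immediately yields $\lambda_0^{(\gamma)}\gamma\leq T^{-1}\log(1/c)$ and hence the proposition.

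\textbf{Transfer from a reference measure.} Since $\mu^{(\gamma)}$ itself is hard to manipulate, I would introduce an auxiliary probability measure. Let $\chi\in\mathcal{C}_c^\infty(\mathcal{O})$ be non-negative with $\int\chi=1$ and $\mathrm{supp}\,\chi\subset K\subset\subset\mathcal{O}$, and set
$$\nu(\mathrm{d}q\mathrm{d}p):=\chi(q)\,\frac{\mathrm{e}^{-\beta|p|^2/2}}{(2\pi\beta^{-1})^{d/2}}\,\mathrm{d}q\,\mathrm{d}p.$$
By Lemma~\ref{lem:cv loi discont} applied with $f=g=1$, for every $(q,p)\in D$ one has $\mathbb{P}_{(q,p)}(\tau_\partial^{(\gamma)}>\gamma T)\to\mathbb{P}(\overline{\tau}^q_\partial>T)$. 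Since the integrand is in $[0,1]$ and $\nu$ has compact $q$-support, dominated convergence gives
$$\mathbb{P}_\nu\left(\tau_\partial^{(\gamma)}>\gamma T\right)\underset{\gamma\to\infty}{\longrightarrow}\int_{\mathcal{O}}\chi(q)\,\mathbb{P}\left(\overline{\tau}^q_\partial>T\right)\mathrm{d}q=:c_0>0,$$
the positivity coming from the fact that $q\mapsto\mathbb{P}(\overline{\tau}^q_\partial>T)$ is bounded below by a positive constant on the compact set $K\subset\subset\mathcal{O}$. Hence $\mathbb{P}_\nu(\tau_\partial^{(\gamma)}>\gamma T)\geq c_0/2$ for all $\gamma$ large. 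It then remains to prove a spectral domination
$$\mathbb{P}_\nu\left(\tau_\partial^{(\gamma)}>t\right)\leq C^{(\gamma)}\,\mathbb{P}_{\mu^{(\gamma)}}\left(\tau_\partial^{(\gamma)}>t\right)=C^{(\gamma)}\mathrm{e}^{-\lambda_0^{(\gamma)}t}$$
with $C^{(\gamma)}$ bounded uniformly in $\gamma$; combined with the preceding lower bound this gives $\lambda_0^{(\gamma)}\gamma\leq T^{-1}\log(2\sup_\gamma C^{(\gamma)}/c_0)$, concluding the proof.

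\textbf{Main obstacle.} The hard part is precisely the uniform-in-$\gamma$ spectral domination. The natural candidate constant is $C^{(\gamma)}=\int\phi^{(\gamma)}\mathrm{d}\nu/\int\phi^{(\gamma)}\mathrm{d}\mu^{(\gamma)}$, where $\phi^{(\gamma)}$ denotes the positive right eigenfunction of $L^{(\gamma)}$ associated with $\lambda_0^{(\gamma)}$, satisfying an absorbing boundary condition on the outflow part $\{(q,p)\in\partial\mathcal{O}\times\mathbb{R}^d:\,n(q)\cdot p>0\}$ of $\partial D$. Controlling this ratio requires uniform regularity and positivity estimates on $\phi^{(\gamma)}$, which are delicate for the hypoelliptic generator $L^{(\gamma)}$ on the unbounded cylinder $D$; such estimates should be accessible through the spectral analysis developed in the companion paper~\cite{LelRamRey2}. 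Should they turn out to be insufficient in that form, an alternative route is to work directly with the two-scale expansion $\phi^{(\gamma)}(q,p)=\overline{\phi}(q)+\gamma^{-1}p\cdot\nabla\overline{\phi}(q)+O(\gamma^{-2})$ suggested by the overdamped limit---whose leading-order solvability condition is the overdamped eigenvalue equation $\overline{L}\,\overline{\phi}=-\overline{\lambda}_0\,\overline{\phi}$---and use a truncated positive modification of $\overline{\phi}(q)$ as a test function in a Collatz--Wielandt-type inequality for $L^{(\gamma)}$, which yields an upper bound of the form $\lambda_0^{(\gamma)}\leq C/\gamma$ directly.
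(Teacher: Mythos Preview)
Your overall strategy---lower bound $\mathbb{P}_\nu(\tau_\partial^{(\gamma)}>\gamma T)$ from below via the overdamped limit, then transfer to $\mu^{(\gamma)}$---is natural, but the transfer step is exactly where the argument breaks. The ``spectral domination'' inequality $\mathbb{P}_\nu(\tau_\partial^{(\gamma)}>t)\leq C^{(\gamma)}\mathrm{e}^{-\lambda_0^{(\gamma)}t}$ with $C^{(\gamma)}$ uniformly bounded requires uniform-in-$\gamma$ control of the right eigenfunction $\phi^{(\gamma)}$ (a lower bound on $\phi^{(\gamma)}$ over $\mathrm{supp}\,\nu$ and an upper bound on $\int\phi^{(\gamma)}\mathrm{d}\nu$). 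In this paper such estimates are \emph{consequences} of Proposition~\ref{bornitude lambda} (they feed into Proposition~\ref{properties psi gamma} via the factor $\mathrm{e}^{\lambda_0^{(\gamma)}\gamma}$), so invoking them here risks circularity; and nothing in~\cite{LelRamRey2} provides them uniformly in $\gamma$. Your Collatz--Wielandt alternative is more promising in principle, but you would need a test function that stays positive on the unbounded cylinder $D=\mathcal{O}\times\mathbb{R}^d$ and behaves correctly at the kinetic boundary, which the naive ansatz $\overline{\phi}(q)+\gamma^{-1}p\cdot\nabla\overline{\phi}(q)$ does not---it changes sign for large $|p|$, and $\overline{\phi}$ vanishes on $\partial\mathcal{O}$ so higher corrections dominate near the boundary.

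The paper sidesteps all of this with a purely probabilistic ``return to a small set'' argument. One fixes a ball $\mathrm{B}(q_0,r)\times\mathrm{B}(0,M)\subset D$ and shows, using the coupling in Lemma~\ref{borne L1} and a simple velocity-tightness bound, that
\[
\inf_{\gamma\geq\gamma_1}\ \inf_{(q,p)\in\mathrm{B}(q_0,r)\times\mathrm{B}(0,M)}\mathbb{P}\big(X^{(\gamma),(q,p)}_{\gamma}\in \mathrm{B}(q_0,r)\times\mathrm{B}(0,M),\ \tau^{(\gamma),(q,p)}_{\partial}>\gamma\big)\geq c>0.
\]
Then the QSD identity applied to $A=\mathrm{B}(q_0,r)\times\mathrm{B}(0,M)$ gives directly
\[
\mu^{(\gamma)}(A)=\mathrm{e}^{\lambda_0^{(\gamma)}\gamma}\int_D\psi^{(\gamma)}(x)\,\mathbb{P}_x\big(X^{(\gamma)}_\gamma\in A,\ \tau_\partial^{(\gamma)}>\gamma\big)\mathrm{d}x\geq c\,\mathrm{e}^{\lambda_0^{(\gamma)}\gamma}\mu^{(\gamma)}(A),
\]
and since $\mu^{(\gamma)}(A)>0$ one obtains $\mathrm{e}^{\lambda_0^{(\gamma)}\gamma}\leq 1/c$. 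This uses nothing about $\psi^{(\gamma)}$ or $\phi^{(\gamma)}$ beyond positivity, which is what makes it work as an \emph{input} to the density estimates rather than a consequence of them.
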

The proof of Proposition~\ref{bornitude lambda} is postponed to the next section. 
In order to state the next lemma, let us first recall some results obtained in~\cite{LelRamRey} related to the transition density of the Langevin process~\eqref{eq:Langevin qsd gamma} absorbed at the boundary $\partial D$. 
 
 The transition kernel $\mathrm{P}^D_t$ of the process $(X_t)_{t\geq0}$ absorbed at the boundary $\partial D$ is defined by:
$$\forall t>0, \quad \forall x\in \overline{D}, \quad \forall A\in\mathcal{B}(D), \qquad \mathrm{P}^D_t(x,A):=\mathbb{P}_x(X_t\in A,\tau_\partial>t).$$
It has been shown in~\cite[Theorem 2.20]{LelRamRey} that $\mathrm{P}^D_t$ admits a smooth transition density 
$$(t,x,y)\in\mathbb{R}_+^*\times\overline{D}\times\overline{D}\mapsto\mathrm{p}^D_t(x,y)\in\mathcal{C}^\infty(\mathbb{R}_+^*\times D\times D)\cap\mathcal{C}(\mathbb{R}_+^*\times\overline{D}\times\overline{D}),$$ which admits the following Gaussian upper-bound, see~\cite[Theorem 2.19]{LelRamRey}.

\begin{theorem}[Gaussian upper-bound]\label{borne densite thm qsd}
Under Hypotheses~\ref{hyp F1 qsd} and~\ref{hyp O qsd}, the transition density $\mathrm{p}^D_t(x,y)$ is such that for all $\alpha\in (0,1)$, there exists $c_\alpha>0$, depending only on $\alpha$, such that for all $t>0$, for all $x,y\in D$,
\begin{equation} \label{borne densité qsd}
    \mathrm{p}^D_t(x,y)\leq\frac{1}{\alpha^d} \sum_{j=0}^\infty \frac{\left( \Vert F\Vert_\infty c_\alpha \sqrt{\pi t}  \right)^j}{(2\gamma\beta^{-1})^{j/2} \Gamma\left(\frac{j+1}{2}\right)} \widehat{\mathrm{p}}^{(\alpha)}_{t}(x,y),
\end{equation}
where $\Gamma$ is the Gamma function and
$\widehat{\mathrm{p}}^{(\alpha)}_{t}(x,y)$ is the transition density
of the Gaussian process
 $(\widehat{q}^{(\alpha)}_t, \widehat{p}^{(\alpha)}_t)_{t \geq 0}$ defined by
\begin{equation}\label{eq:processus alpha}
  \left\{\begin{aligned}
    \mathrm{d}\widehat{q}^{(\alpha)}_t &= \widehat{p}^{(\alpha)}_t \mathrm{d}t,\\
    \mathrm{d}\widehat{p}^{(\alpha)}_t &= -\gamma\widehat{p}^{(\alpha)}_t \mathrm{d}t + \frac{\sqrt{2\gamma\beta^{-1}}}{\sqrt{\alpha}}\mathrm{d}B_t.
  \end{aligned}\right.
\end{equation}  
\end{theorem}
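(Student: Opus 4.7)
The plan is to establish the pointwise bound via a parametrix (Duhamel series) expansion, treating the full generator of the Langevin process~\eqref{eq:Langevin qsd gamma} as a perturbation of the constant-coefficient hypoelliptic operator governing the $\alpha$-scaled Gaussian reference $(\widehat{q}^{(\alpha)}_t,\widehat{p}^{(\alpha)}_t)_{t \ge 0}$ of~\eqref{eq:processus alpha}. If $\mathcal{L}^{(\gamma)}$ denotes the generator of~\eqref{eq:Langevin qsd gamma} and $\widehat{\mathcal{L}}^{(\alpha),(\gamma)}$ the generator of~\eqref{eq:processus alpha}, then
$$\mathcal{L}^{(\gamma)} = \widehat{\mathcal{L}}^{(\alpha),(\gamma)} + F(q)\cdot\nabla_p - \frac{\gamma}{\beta}\Bigl(\frac{1}{\alpha}-1\Bigr)\Delta_p,$$
so $\widehat{\mathcal{L}}^{(\alpha),(\gamma)}$ provides a ``puffed-up'' Gaussian reference whose enlarged velocity diffusion is meant to dominate the missing noise of $\mathcal{L}^{(\gamma)}$, the price being the overall factor $\alpha^{-d}$, while $F$ is treated as a first-order perturbation generating the series in $j$.

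First, I would derive a pointwise Duhamel identity for the absorbed densities, formally of the form
$$\mathrm{p}^D_t(x,y) = \widehat{\mathrm{p}}^{(\alpha),D}_t(x,y) + \int_0^t\!\!\int_D \widehat{\mathrm{p}}^{(\alpha),D}_{t-s}(x,z)\,F(q_z)\cdot\nabla_{p_z}\mathrm{p}^D_s(z,y)\,\mathrm{d}z\,\mathrm{d}s,$$
obtained by integration by parts in the backward Kolmogorov equation and absorption of the $\Delta_p$ deficit into the Gaussian envelope of the reference. Iterating this identity $j$ times yields a $j$-fold convolution of $\widehat{\mathrm{p}}^{(\alpha),D}_s$ against copies of $F\cdot\nabla_p$. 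The core analytic input is then a pointwise Gaussian bound of the type $|\nabla_p\widehat{\mathrm{p}}^{(\alpha)}_s(x,z)| \leq (c_\alpha/\sqrt{s})\,\widehat{\mathrm{p}}^{(\alpha')}_s(x,z)$ with $\alpha'$ slightly larger than $\alpha$; since $(\widehat{q}^{(\alpha)}_s,\widehat{p}^{(\alpha)}_s)$ is explicitly Gaussian with covariance given by the Kalman matrix of the Ornstein--Uhlenbeck representation, this reduces to elementary manipulations of quadratic forms, with $c_\alpha$ arising from completing the square. Chaining these widened Gaussians by semigroup composition and returning to $\widehat{\mathrm{p}}^{(\alpha)}_t(x,y)$ at the last step contributes the prefactor $\alpha^{-d}$, while the nested time integrals reduce to Beta-function identities
$$\int_{0<s_1<\dots<s_j<t}\prod_{i=1}^{j}(s_i-s_{i-1})^{-1/2}\,\mathrm{d}s_1\cdots\mathrm{d}s_j \;=\; \frac{(\sqrt{\pi}\,)^{j}\,t^{j/2}}{\Gamma((j+2)/2)},$$
which, combined with the normalisation factor $(2\gamma\beta^{-1})^{-j/2}$ produced by each velocity differentiation of the Gaussian, yields exactly the stated series.

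The main obstacle will be making the Duhamel iteration rigorous in the absorbed setting: the kernel $\mathrm{p}^D_t$ satisfies Dirichlet-type boundary conditions only on the non-characteristic part of $\partial D$, there is no clean spectral decomposition, and differentiating $\widehat{\mathrm{p}}^{(\alpha),D}_s$ up to the boundary requires the hypoelliptic regularity theory developed in~\cite{LelRamRey}. A pragmatic way around this is to prove the series bound first for the \emph{free} transition densities (no absorption) by exploiting the explicit Gaussian form and then invoke the strong Markov property at $\tau_\partial^{(\gamma)}$ to conclude $\mathrm{p}^D_t \leq \mathrm{p}_t$ pointwise. This bootstrap circumvents the delicate boundary regularity while preserving both the Gaussian envelope $\widehat{\mathrm{p}}^{(\alpha)}_t(x,y)$ and the precise $j$-dependence of the coefficients.
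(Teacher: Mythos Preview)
This theorem is not proved in the present paper at all: it is simply quoted from the companion work~\cite[Theorem~2.19]{LelRamRey}, so there is no in-paper argument against which to compare your proposal. Your parametrix/Duhamel strategy---treating $F\cdot\nabla_p$ as a first-order perturbation of the inflated Gaussian reference~\eqref{eq:processus alpha}, chaining the gradient estimate $|\nabla_p\widehat{\mathrm p}^{(\alpha)}_s|\lesssim (2\gamma\beta^{-1})^{-1/2}s^{-1/2}\widehat{\mathrm p}^{(\alpha')}_s$, and evaluating the iterated time integrals via Beta functions---is the standard route for such bounds and is plausibly close to what is carried out in~\cite{LelRamRey}, but any detailed comparison would have to be made against that reference rather than against the present paper.

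One remark on the proposal itself: the decomposition you write contains, besides $F\cdot\nabla_p$, the second-order deficit $-\tfrac{\gamma}{\beta}(\tfrac{1}{\alpha}-1)\Delta_p$, and you dispose of it with the phrase ``absorption of the $\Delta_p$ deficit into the Gaussian envelope''. This is the step that actually requires care: in a parametrix iteration a negative second-order remainder does not simply disappear, and one typically needs either a Chapman--Kolmogorov/semigroup argument showing that the $\alpha$-scaled Gaussian dominates the true free kernel pointwise (this is where the global factor $\alpha^{-d}$ genuinely enters), or an explicit treatment of that term in the series. Your final bootstrap---proving the bound for the free density and then using $\mathrm p^D_t\le \mathrm p_t$---is sound and does sidestep the boundary issues, but the handling of the $\Delta_p$ term upstream should be made explicit rather than asserted.
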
  
\begin{remark}\label{rk:gaussian ub gamma}
Notice that, in particular, for all $\alpha\in (0,1)$, there exists $c_\alpha>0$, depending only on $\alpha$, such that for all $t>0$, for all $x,y\in D$,
$$\mathrm{p}^D_{\gamma t}(x,y)\leq\frac{1}{\alpha^d} \sum_{j=0}^\infty \frac{\left( \Vert F\Vert_\infty c_\alpha \sqrt{\pi t}  \right)^j}{(2\beta^{-1})^{j/2} \Gamma\left(\frac{j+1}{2}\right)} \widehat{\mathrm{p}}^{(\alpha)}_{\gamma t}(x,y),$$
where the prefactor is now independent of $\gamma$.
\end{remark}
The purpose of the next lemma is to give some estimates satisfied by the transition density $\widehat{\mathrm{p}}^{(\alpha)}_t$ introduced in Theorem~\ref{borne densite thm qsd}, which will prove to be useful for the proof of Proposition~\ref{properties psi gamma}.

\noindent Let $\Phi_1,\Phi_2$ be the following positive continuous functions on $\mathbb{R}$:
\begin{equation}\label{Phi_1}
\Phi_1:\rho\in\mathbb{R}\mapsto\begin{cases}
  \frac{1-\mathrm{e}^{-\rho}}{\rho}&\text{if $\rho\neq0$,}\\ 
  1 & \text{if $\rho=0$,} 
\end{cases}
\end{equation}
\begin{equation}\label{Phi_2}
\Phi_2:\rho\in\mathbb{R}\mapsto\begin{cases}
  \frac{3}{2 \rho^3}\left[2\rho-3+4 \mathrm{e}^{-\rho}-\mathrm{e}^{-2\rho}\right] & \text{if $\rho\neq0$,}\\
  1 &\text{if $\rho=0$.}
\end{cases}
\end{equation}
 
 \noindent One can show, see~\cite[Section 5.1]{LelRamRey}, that for all $t \geq 0$ and $\alpha\in(0,1]$, the vector $(\widehat{q}_t^{(\alpha)},\widehat{p}_t^{(\alpha)})$ admits the following law under $\mathbb{P}_{(q,p)}$
\begin{equation}\label{loi gaussienne}
\begin{pmatrix}
\widehat{q}^{(\alpha)}_t  \\
\widehat{p}^{(\alpha)}_t
\end{pmatrix}
  \sim  \mathcal{N}_{2d}\left(
\begin{matrix} 
\begin{pmatrix}
m_q(t)  \\
m_p(t) 
\end{pmatrix},
\frac{C(t)}{\alpha}
\end{matrix} 
\right)  , 
\end{equation}
where the mean vector is $$m_q(t):=q+t p \Phi_1(\gamma t),\qquad m_p(t):=p \mathrm{e}^{-\gamma  t},$$
and the matrix $C(t)$ is defined by:
$$C(t):= \begin{pmatrix}
c_{qq}(t)  I_d & c_{qp}(t)  I_d \\
c_{qp}(t)  I_d & c_{pp}(t)  I_d 
\end{pmatrix}, 
$$
where $I_d$ is the identity matrix in $\mathbb{R}^{d\times d}$ and
\begin{equation}\label{coeff cov}
    c_{qq}(t):=\frac{\sigma^2 t^3}{3} \Phi_2(\gamma t),\qquad c_{qp}(t):=\frac{\sigma^2 t^2}{2} \Phi_1(\gamma t)^2,\qquad c_{pp}(t):=\sigma^2t \Phi_1(2\gamma t)  . 
\end{equation} 
The determinant of the covariance matrix $\frac{C(t)}{\alpha}$ is $\mathrm{det}(\frac{C(t)}{\alpha})=(\frac{\sigma^4 t^4}{12\alpha} \phi(\gamma t))^d$ where $\phi$ is the positive continuous function defined by 
\begin{equation}\label{expr phi}
\phi:\rho\in\mathbb{R}\mapsto 4 \Phi_2(\rho) \Phi_1(2 \rho)-3 \Phi_1(\rho)^4=\begin{cases}
    \frac{6 (1-\mathrm{e}^{-\rho})}{\rho^4}\left[-2+\rho+(2+\rho)\mathrm{e}^{-\rho}\right]&\text{if $\rho\neq0$,}\\
   1 &\text{if $\rho=0$.}
\end{cases}
\end{equation}
Let us now prove the following lemma. 
 \begin{lemma}[Properties of the transition densities]\label{densite prop} 
    For any $t>0$, $\alpha\in(0,1)$, there exist $C_t>0$ and $\gamma_t>1$ such that for all $\gamma\geq\gamma_t$ and $(q,p),(q',p')\in\mathbb{R}^{2d}$, 
    \begin{equation}\label{borne densite gamma1}
       \widehat{\mathrm{p}}^{(\alpha)}_{\gamma t}((q,p),(q',p'))\leq C_t,
    \end{equation}
    and
    \begin{equation}\label{borne densite gamma2}
        \sup_{q'\in\mathcal{O}}\int_{\mathbb{R}^d}\widehat{\mathrm{p}}^{(\alpha)}_{\gamma t}((q,p),(q',p')) \mathrm{d}p'\leq C_t. 
    \end{equation} 
\end{lemma}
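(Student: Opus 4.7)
The plan is to exploit the explicit Gaussian form~\eqref{loi gaussienne} of the transition density $\widehat{\mathrm{p}}^{(\alpha)}_{\gamma t}$ and rely on the asymptotic behaviour of the auxiliary functions $\Phi_1$, $\Phi_2$, $\phi$ evaluated at $\gamma^2 t$ as $\gamma\to\infty$. Since the density is fully explicit, everything reduces to tracking how the prefactor $1/\sqrt{\det(C(\gamma t)/\alpha)}$ scales in $\gamma$, and to bounding the exponential by $1$.

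First, I would compute the large-$\rho$ expansions: from~\eqref{Phi_1},~\eqref{Phi_2},~\eqref{expr phi} one has, as $\rho\to\infty$,
\begin{equation*}
\Phi_1(\rho)\sim\frac{1}{\rho},\qquad \Phi_2(\rho)\sim\frac{3}{\rho^2},\qquad \phi(\rho)\sim\frac{6}{\rho^3}.
\end{equation*}
Substituting $\rho=\gamma^2 t$ and $\sigma^2=2\gamma\beta^{-1}$ into~\eqref{coeff cov} yields
\begin{equation*}
c_{qq}(\gamma t)\xrightarrow[\gamma\to\infty]{}2\beta^{-1}t,\qquad c_{pp}(\gamma t)\xrightarrow[\gamma\to\infty]{}\beta^{-1},\qquad c_{qp}(\gamma t)=O(1/\gamma),
\end{equation*}
and, crucially for the determinant,
\begin{equation*}
\det\!\left(\frac{C(\gamma t)}{\alpha}\right)=\left(\frac{\sigma^4(\gamma t)^4}{12\alpha}\phi(\gamma^2 t)\right)^{d}\xrightarrow[\gamma\to\infty]{}\left(\frac{2\beta^{-2}t}{\alpha}\right)^{d}>0.
\end{equation*}
Hence there exist $\gamma_t>1$ and a constant $c_t>0$ such that $\det(C(\gamma t)/\alpha)\geq c_t$ for every $\gamma\geq\gamma_t$.

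To prove~\eqref{borne densite gamma1}, I would simply bound the Gaussian density by its normalising prefactor:
\begin{equation*}
\widehat{\mathrm{p}}^{(\alpha)}_{\gamma t}((q,p),(q',p'))\leq\frac{1}{(2\pi)^{d}\sqrt{\det(C(\gamma t)/\alpha)}}\leq\frac{1}{(2\pi)^{d}\sqrt{c_t}},
\end{equation*}
which is uniform in $(q,p),(q',p')\in\mathbb{R}^{2d}$. For~\eqref{borne densite gamma2}, I would use the block structure of $C(t)$: the first marginal of the Gaussian vector $(\widehat{q}^{(\alpha)}_{\gamma t},\widehat{p}^{(\alpha)}_{\gamma t})$ is Gaussian on $\mathbb{R}^d$ with mean $m_q(\gamma t)$ and covariance $c_{qq}(\gamma t)\alpha^{-1} I_d$, so
\begin{equation*}
\int_{\mathbb{R}^d}\widehat{\mathrm{p}}^{(\alpha)}_{\gamma t}((q,p),(q',p'))\,\mathrm{d}p'=\frac{1}{(2\pi c_{qq}(\gamma t)/\alpha)^{d/2}}\exp\!\left(-\frac{\alpha\,|q'-m_q(\gamma t)|^{2}}{2 c_{qq}(\gamma t)}\right)\leq\frac{1}{(2\pi c_{qq}(\gamma t)/\alpha)^{d/2}},
\end{equation*}
and the right-hand side is uniformly bounded for $\gamma\geq\gamma_t$ because $c_{qq}(\gamma t)\to 2\beta^{-1}t>0$, independently of $q'$. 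Taking the supremum over $q'\in\mathcal{O}$ (in fact, over $q'\in\mathbb{R}^{d}$) yields the desired estimate.

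The only technical point is the lower bound on $\det(C(\gamma t)/\alpha)$, which requires being careful that the $\gamma^4$ growth coming from $\sigma^4(\gamma t)^4$ is exactly compensated by the $\gamma^{-6}$ decay of $\phi(\gamma^2 t)$ together with the $\sigma^{4}\propto\gamma^{2}$ factor; once this cancellation is checked explicitly, the two bounds follow immediately from the Gaussian form.
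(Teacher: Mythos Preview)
Your proof is correct and follows essentially the same route as the paper's: both bound the Gaussian density by its normalising prefactor and then check, via the explicit asymptotics of $\Phi_1,\Phi_2,\phi$ at $\rho=\gamma^2 t$, that $\det(C(\gamma t)/\alpha)$ and $c_{qq}(\gamma t)$ converge to positive limits as $\gamma\to\infty$. The paper records the equivalent limits $\gamma^6 t^4\phi(\gamma^2 t)\to 6t$ and $t^3\gamma^4\Phi_2(\gamma^2 t)\to 3t$, which match your computations exactly.
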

\begin{proof}[Proof of Lemma~\ref{densite prop}]
Let $t>0$ and $\alpha\in(0,1)$. The law of the Gaussian vector $(\widehat{q}_{\gamma t}^{(\alpha)},\widehat{p}_{\gamma t}^{(\alpha)})$ detailed above ensures that for all $(q,p),(q',p')\in\mathbb{R}^{2d}$,
$$\mathrm{p}^{(\alpha)}_{\gamma t}((q,p),(q',p'))\leq\frac{1}{ \sqrt{(2 \pi)^{2d} \mathrm{det}\left(\frac{C(\gamma t)}{\alpha}\right)}}=\frac{1}{ \sqrt{(2 \pi)^{2d} \left(\frac{(2\gamma\beta^{-1})^2 (\gamma t)^4}{12\alpha} \phi(\gamma^2 t)\right)^d}}.$$
Besides, since $\gamma^6 t^4 \phi(\gamma^2 t)\underset{\gamma\rightarrow\infty}{\longrightarrow}6 t$, the estimate~\eqref{borne densite gamma1} easily follows. 

Let us now prove~\eqref{borne densite gamma2}. Since
$\widehat{\mathrm{p}}^{(\alpha)}_{\gamma t}((q,p),(q',p'))$ is the density of the Gaussian vector $(\widehat{q}^{(\alpha)}_{\gamma t}, \widehat{p}^{(\alpha)}_{\gamma t})$, the
expression of
$\int_{\mathbb{R}^d}\widehat{\mathrm{p}}^{(\alpha)}_t((q,p),(q',p'))
\mathrm{d}p'$ corresponds to the marginal density of $\widehat{q}^{(\alpha)}_{\gamma t}$ under $\mathbb{P}_{(q,p)}$. Besides, under $\mathbb{P}_{(q,p)}$, 
\begin{equation*}
  \widehat{q}^{(\alpha)}_{\gamma t} \sim \mathcal{N}_d\left(q + \gamma tp\Phi_1(\gamma^2 t), \frac{c_{qq}(\gamma t)}{\alpha}I_d\right),\quad \frac{c_{qq}(\gamma t)}{\alpha} = \frac{2\beta^{-1} t^3}{3\alpha}\gamma^4\Phi_2(\gamma^2 t),
\end{equation*}
so that  
 \begin{align*}
   \int_{\mathbb{R}^d}\widehat{\mathrm{p}}^{(\alpha)}_{\gamma t}((q,p),(q',p')) \mathrm{d}p'&=\frac{(3\alpha)^{d/2}}{\left(4 \pi\beta^{-1}t^3 \gamma^4\Phi_2(\gamma^2 t)\right)^{d/2}} \mathrm{e}^{-\frac{3 \alpha}{4\beta^{-1}t^3 \gamma^4  \Phi_2(\gamma^2 t)}\left\vert q'-q-\gamma t p \Phi_1(\gamma^2 t) \right\vert^2} \\
    &\leq \frac{(3\alpha)^{d/2}}{\left(4 \pi\beta^{-1}t^3 \gamma^4\Phi_2(\gamma^2 t)\right)^{d/2}}. 
\end{align*}
Since $t^3\gamma^4\Phi_2(\gamma^2 t)\underset{\gamma\rightarrow\infty}{\longrightarrow}3 t$, the upper bound~\eqref{borne densite gamma2} immediately follows.
\end{proof}
Using the Gaussian upper-bound recalled in Remark~\ref{rk:gaussian ub gamma}, we are now able to prove Proposition~\ref{properties psi gamma}.
\begin{proof}[Proof of Proposition~\ref{properties psi gamma}]
For any $\alpha\in (0,1)$ and any $T>0$, there exists $C_{\alpha,T}>0$ such that for all $\gamma>0$, for all $t\in(0,T]$, for all $x,y\in D$,
\begin{equation}\label{Gaussian ub} 
    \mathrm{p}^D_{\gamma t}(x,y)\leq C_{\alpha,T} \widehat{\mathrm{p}}^{(\alpha)}_{\gamma t}(x,y),
\end{equation}
where $\widehat{\mathrm{p}}^{(\alpha)}_{s}(x,y)$ is the transition density
of the Gaussian process
 $(\widehat{q}^{(\alpha)}_s, \widehat{p}^{(\alpha)}_s)_{s \geq 0}$ defined in~\eqref{eq:processus alpha}.  

Let $\gamma>0$, by Definition~\ref{def QSD 1} of a QSD, $\mu^{(\gamma)}$ is such that for all $A\in\mathcal{B}(D)$,  
$$\mathbb{P}_{\mu^{(\gamma)}}(X^{(\gamma)}_\gamma\in A, \tau^{(\gamma)}_\partial>\gamma)=\mu^{(\gamma)}(A)\mathrm{e}^{-\lambda_0^{(\gamma)}\gamma},$$ since $\mathbb{P}_{\mu^{(\gamma)}}(\tau^{(\gamma)}_\partial>\gamma)=\mathrm{e}^{-\lambda_0^{(\gamma)}\gamma}$ because $\tau^{(\gamma)}_\partial$ follows the exponential law of parameter $\lambda_0^{(\gamma)}$, see~\cite[Theorem 2.13]{LelRamRey2}. 

The equality above being satisfied for any $A\in\mathcal{B}(D)$, and since $\mu^{(\gamma)}$ has the continuous density $\psi^{(\gamma)}$ with respect to the Lebesgue measure on $D$, one deduces that for all $(q',p')\in D$,
$$\psi^{(\gamma)}(q',p')=\mathrm{e}^{\lambda_0^{(\gamma)}\gamma} \iint_{D}\psi^{(\gamma)}(q,p) \mathrm{p}^D_{\gamma}((q,p),(q',p')) \mathrm{d}p \mathrm{d}q .$$  
Let $\alpha\in(0,1)$. Using Remark~\ref{rk:gaussian ub gamma}, there exists $C>0$ such that for all $\gamma>0$ and $(q',p')\in D$, 
\begin{equation}\label{ineq1 qsd}
    \psi^{(\gamma)}(q',p')\leq C \mathrm{e}^{\lambda_0^{(\gamma)}\gamma} \iint_{D}\psi^{(\gamma)}(q,p) \widehat{\mathrm{p}}^{(\alpha)}_{\gamma}((q,p),(q',p')) \mathrm{d}p \mathrm{d}q,  
\end{equation}
where $\widehat{\mathrm{p}}^{(\alpha)}_t$ is the transition density of the process $(\widehat{q}^{(\alpha)}_t, \widehat{p}^{(\alpha)}_t)_{t \geq 0}$ defined in~\eqref{eq:processus alpha}. By Proposition~\ref{bornitude lambda} and the upper-bounds~\eqref{borne densite gamma1} and~\eqref{borne densite gamma2} in Lemma~\ref{densite prop}, the first two estimates in Proposition~\ref{properties psi gamma} follow from~\eqref{ineq1 qsd} and the fact that $\psi^{(\gamma)}$ is the density of a probability measure on $D$. It remains now to prove the last estimate in Proposition~\ref{properties psi gamma}. 

It follows from Fubini-Tonelli's theorem and the inequality~\eqref{ineq1 qsd} that
\begin{equation}\label{maj2}
  \iint_{D}\psi^{(\gamma)}(q',p')\vert p'\vert \mathrm{d}p' \mathrm{d}q'\leq C \mathrm{e}^{\lambda_0^{(\gamma)}\gamma} \iint_{D}\psi^{(\gamma)}(q,p) \left(\iint_{D}\widehat{\mathrm{p}}^{(\alpha)}_{\gamma}((q,p),(q',p'))\vert p'\vert \mathrm{d}p' \mathrm{d}q'\right)\mathrm{d}p \mathrm{d}q.  
\end{equation}
Let us now prove that $$\limsup_{\gamma \to \infty} \sup_{(q,p)\in D} \iint_{D}\widehat{\mathrm{p}}^{(\alpha)}_{\gamma}((q,p),(q',p'))\vert p'\vert \mathrm{d}p' \mathrm{d}q' < \infty,$$ this will conclude the proof using~\eqref{borne lambda} and~\eqref{maj2}.

Let us start by rewriting, for any $(q,p) \in D$ and $\gamma>0$,
\begin{align*}
  \iint_{D}\widehat{\mathrm{p}}^{(\alpha)}_{\gamma}((q,p),(q',p'))\vert p'\vert \mathrm{d}p' \mathrm{d}q' &= \mathbb{E}_{(q,p)}\left[\mathbb{1}_{\widehat{q}^{(\alpha)}_\gamma \in \mathcal{O}}|\widehat{p}^{(\alpha)}_\gamma|\right] \\
  &\leq \mathbb{E}_{(q,p)}\left[|\widehat{p}^{(\alpha)}_\gamma-p\mathrm{e}^{-\gamma^2}|\right] + |p|\mathrm{e}^{-\gamma^2}\mathbb{P}_{(q,p)}\left(\widehat{q}^{(\alpha)}_\gamma \in \mathcal{O}\right),
\end{align*}
and recall that under $\mathbb{P}_{(q,p)}$, $\widehat{q}^{(\alpha)}_\gamma$ and $\widehat{p}^{(\alpha)}_\gamma$ have marginal distributions
\begin{equation*}
  \widehat{q}^{(\alpha)}_\gamma \sim \mathcal{N}_d\left(q+\gamma p \Phi_1(\gamma^2), \frac{2\beta^{-1}\gamma^4}{3\alpha} \Phi_2(\gamma^2)I_d\right), \qquad \widehat{p}^{(\alpha)}_\gamma \sim \mathcal{N}_d\left(p\mathrm{e}^{-\gamma^2}, \frac{2\beta^{-1} \gamma^2 \Phi_1(2\gamma^2)}{\alpha}I_d\right).
\end{equation*}
As a consequence, we deduce from the Cauchy-Schwarz inequality that
\begin{equation*}
  \mathbb{E}_{(q,p)}\left[|\widehat{p}^{(\alpha)}_\gamma-p\mathrm{e}^{-\gamma^2}|\right] \leq \sqrt{\frac{2d\beta^{-1} \gamma^2 \Phi_1(2\gamma^2)}{\alpha}},
\end{equation*}
the right-hand side of which is uniform in $(q,p)$ and is bounded when $\gamma \to \infty$. On the other hand, let us define $\delta:=\sup_{q,q'\in\mathcal{O}}\vert q-q'\vert$ (which is finite since $\mathcal{O}$ is bounded) and note that
\begin{align*}
  \mathbb{P}_{(q,p)}\left(\widehat{q}^{(\alpha)}_\gamma \in \mathcal{O}\right) &\leq \mathbb{P}_{(q,p)}\left(|\widehat{q}^{(\alpha)}_\gamma-q| \leq \delta\right)\\
  &= \mathbb{P}\left(\left|\gamma p \Phi_1(\gamma^2) + \sqrt{\frac{2\beta^{-1}\gamma^4}{3\alpha} \Phi_2(\gamma^2)}Z\right| \leq \delta\right), 
\end{align*}
where $Z \sim \mathcal{N}_d(0,I_d)$. By the triangle, Markov and Cauchy-Schwarz inequalities, if $|p|\not=0$ then
\begin{align*}
  \mathbb{P}\left(\left|\gamma p \Phi_1(\gamma^2) + \sqrt{\frac{2\beta^{-1}\gamma^4}{3\alpha} \Phi_2(\gamma^2)}Z\right| \leq \delta\right) &\leq \mathbb{P}\left(\sqrt{\frac{2\beta^{-1}\gamma^4}{3\alpha} \Phi_2(\gamma^2)} |Z| + \delta \geq \gamma |p| \Phi_1(\gamma^2)\right)\\
  &\leq \frac{\sqrt{\frac{2d\beta^{-1}\gamma^4}{3\alpha} \Phi_2(\gamma^2)} + \delta}{\gamma |p| \Phi_1(\gamma^2)},
\end{align*}
so that
\begin{equation*}
  |p|\mathrm{e}^{-\gamma^2}\mathbb{P}_{(q,p)}\left(\widehat{q}^{(\alpha)}_\gamma \in \mathcal{O}\right) \leq \mathrm{e}^{-\gamma^2}\frac{\sqrt{\frac{2d\beta^{-1}\gamma^4}{3\alpha} \Phi_2(\gamma^2)} + \delta}{\gamma \Phi_1(\gamma^2)},
\end{equation*}
the right-hand side of which is uniform in $(q,p)$ and vanishes when $\gamma \to \infty$.
\end{proof}

\subsubsection{Proof of Proposition~\ref{bornitude lambda}}

Let us finally prove Proposition~\ref{bornitude lambda}. We will need the following intermediate lemma.
\begin{lemma}[Uniform velocity tightness]\label{tension vitesse lemme} Let Assumption~\ref{hyp F2 qsd} hold.
 For every $\epsilon>0$, there exists $M>0$ such that for all $\gamma\geq4$,
\begin{equation}\label{majoration}
    \sup_{(q,p)\in\mathcal{O}\times\mathrm{B}(0,M)}\mathbb{P}\left(p^{(\gamma),(q,p)}_\gamma\notin\mathrm{B}(0,M)\right)\leq\epsilon,
\end{equation} 
where $\mathrm{B}(0,M):=\{p\in\mathbb{R}^d:\vert p\vert<M\}$.
 \end{lemma}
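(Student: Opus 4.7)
The plan is to exploit the explicit Duhamel representation of the momentum already recorded in~\eqref{eq:duhamel eq vitesse}. Evaluated at time $t=\gamma$, the formula reads
\begin{equation*}
    p^{(\gamma),(q,p)}_\gamma = p\, \mathrm{e}^{-\gamma^2} + \mathrm{e}^{-\gamma^2}\int_0^\gamma \mathrm{e}^{\gamma s} F(q^{(\gamma),(q,p)}_s)\,\mathrm{d}s + \sqrt{2\gamma\beta^{-1}}\,\mathrm{e}^{-\gamma^2}\int_0^\gamma \mathrm{e}^{\gamma s}\,\mathrm{d}B_s,
\end{equation*}
so $|p^{(\gamma),(q,p)}_\gamma|$ is bounded above by three terms that I will control separately. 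The first is purely deterministic and, for $\gamma\geq 4$, satisfies $|p|\mathrm{e}^{-\gamma^2}\leq M\mathrm{e}^{-16}$ whenever $|p|<M$. The second is also deterministic in modulus: by Assumption~\ref{hyp F2 qsd}, $F$ is bounded, so
\begin{equation*}
    \left|\mathrm{e}^{-\gamma^2}\int_0^\gamma \mathrm{e}^{\gamma s} F(q^{(\gamma),(q,p)}_s)\,\mathrm{d}s\right| \leq \|F\|_\infty\,\mathrm{e}^{-\gamma^2}\,\frac{\mathrm{e}^{\gamma^2}-1}{\gamma}\leq \frac{\|F\|_\infty}{\gamma}\leq \frac{\|F\|_\infty}{4}.
\end{equation*}

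For the third (stochastic) term, the same Itô isometry used in the analysis of $Y_T^{(\gamma)}$ shows that
\begin{equation*}
    G^{(\gamma)}:=\sqrt{2\gamma\beta^{-1}}\,\mathrm{e}^{-\gamma^2}\int_0^\gamma \mathrm{e}^{\gamma s}\,\mathrm{d}B_s \sim \mathcal{N}_d\bigl(0,\beta^{-1}(1-\mathrm{e}^{-2\gamma^2})I_d\bigr),
\end{equation*}
whose covariance is dominated by $\beta^{-1} I_d$ uniformly in $\gamma\geq 4$. In particular, $\mathbb{P}(|G^{(\gamma)}|>R)$ is bounded above by a standard Gaussian tail $\eta(R)$ that does not depend on $\gamma$, $q$ or $p$, and satisfies $\eta(R)\to 0$ as $R\to\infty$.

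Putting these three estimates together, for $\gamma\geq 4$ and $|p|<M$,
\begin{equation*}
    \bigl\{|p^{(\gamma),(q,p)}_\gamma|\geq M\bigr\}\ \subseteq\ \Bigl\{|G^{(\gamma)}|\geq M(1-\mathrm{e}^{-16})-\tfrac{\|F\|_\infty}{4}\Bigr\},
\end{equation*}
so that $\mathbb{P}(p^{(\gamma),(q,p)}_\gamma\notin \mathrm{B}(0,M))\leq \eta\bigl(M(1-\mathrm{e}^{-16})-\|F\|_\infty/4\bigr)$, uniformly in $(q,p)\in\mathcal{O}\times\mathrm{B}(0,M)$ and $\gamma\geq 4$. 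Given $\epsilon>0$ it then suffices to choose $M$ large enough that this Gaussian tail is smaller than $\epsilon$, which yields~\eqref{majoration}. I do not anticipate a real obstacle here: the whole argument is a direct consequence of the explicit Ornstein--Uhlenbeck structure of the momentum together with the boundedness of $F$, and the choice $\gamma\geq 4$ is only used to absorb the $\mathrm{e}^{-\gamma^2}|p|$ and $\|F\|_\infty/\gamma$ contributions into quantities that are small relative to $M$.
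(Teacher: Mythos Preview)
Your proof is correct and follows essentially the same approach as the paper: both arguments start from the Duhamel representation~\eqref{eq:duhamel eq vitesse}, bound the two deterministic contributions using $\gamma\geq 4$ and the boundedness of $F$, and then control the Gaussian stochastic integral by a tail estimate that is uniform in $\gamma$. The only cosmetic difference is that the paper uses Markov's inequality on $|Y_1^{(\gamma)}|$ with the explicit choice $M\geq \frac{2\sqrt{d\beta^{-1}}}{\epsilon}+\|F\|_\infty$, whereas you invoke the Gaussian tail directly; the underlying argument is the same.
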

\begin{proof}
 Let $\epsilon>0$. Let us take $M\geq \frac{2 \sqrt{d\beta^{-1}}}{\epsilon}+\Vert F\Vert_\infty$. By~\eqref{eq:Langevin qsd gamma}, for all $x=(q,p)\in\mathcal{O}\times\mathrm{B}(0,M)$ and $\gamma\geq4$ (so that $\frac{M}{\gamma^2}+\frac{M}{\gamma}\leq\frac{M}{2}$),  
\begin{align*}
    \left\vert p^{(\gamma),x}_{\gamma}\right\vert&=\left\vert p \mathrm{e}^{-\gamma^2}+\gamma \mathrm{e}^{-\gamma^2}\int_0^1\mathrm{e}^{\gamma^2 s} F(q^{(\gamma),x}_{\gamma s}) \mathrm{d}s+Y_1^{(\gamma)}\right\vert\\
    &\leq \frac{M}{\gamma^2} \underbrace{\gamma^2 \mathrm{e}^{-\gamma^2}}_{<1}+\frac{\Vert F\Vert_\infty}{\gamma}+\left\vert Y_1^{(\gamma)}\right\vert\\
    &< \frac{M}{2}+\left\vert Y_1^{(\gamma)}\right\vert
\end{align*}
since $M\geq\Vert F\Vert_\infty$ and $\gamma\geq4$. Besides, 
$$\mathbb{P}\left(\left\vert Y_1^{(\gamma)}\right\vert> M/2\right)\leq\frac{\mathbb{E}\left[\left\vert Y_1^{(\gamma)}\right\vert\right]}{M/2}\leq\frac{2  \sqrt{d\beta^{-1}}}{M}\leq\epsilon$$ by definition of $M$. Therefore, for all $(q,p)\in\mathcal{O}\times\mathrm{B}(0,M)$,
\begin{equation*}
  \mathbb{P}\left(p^{(\gamma),(q,p)}_\gamma\notin\mathrm{B}(0,M)\right)\leq\epsilon.\qedhere
\end{equation*}
\end{proof}
Let us now prove Proposition~\ref{bornitude lambda}.
\begin{proof}[Proof of Proposition~\ref{bornitude lambda}]
Let $q_0\in\mathcal{O}$. Let $r\in(0,1)$ such that $\mathrm{B}(q_0,2r)\subset\mathcal{O}$. For $q\in\mathbb{R}^d$, we define the following stopping time: $$\overline{\tau}^{(\gamma),q}_0=\inf \{t>0: \overline{q}^{(\gamma),q}_t\notin\mathrm{B}(q_0,3r/2)\}.$$ Let also $a:=\inf_{q\in\mathrm{B}(q_0,r)}\mathbb{P}(\overline{q}^{(\gamma),q}_1\in \mathrm{B}(q_0,r/2),\overline{\tau}^{(\gamma),q}_0>1)$. Notice that $a>0$ since it is well known that the function $q\in\mathrm{B}(q_0,r)\mapsto\mathbb{P}(\overline{q}^{(\gamma),q}_1\in \mathrm{B}(q_0,r/2),\overline{\tau}^{(\gamma),q}_0>1)$ is continuous and positive on the compact set $\overline{\mathrm{B}(q_0,r)}$. Besides, $a$ does not depend on $\gamma$ since the law of the process $(\overline{q}^{(\gamma),q}_t)_{t \geq 0}$ does not depend on $\gamma$. Let us take $\epsilon\in(0,\frac{a}{4})$ and $M>0$ such that~\eqref{majoration} in Lemma~\ref{tension vitesse lemme} is satisfied. 

\noindent\textbf{Step 1:} Let us prove that there exists $\gamma_1>1$ such that
\begin{equation}\label{minoration 2}
    c:=\inf_{\gamma\geq\gamma_1}\inf_{(q,p)\in\mathrm{B}(q_0,r)\times\mathrm{B}(0,M)}\mathbb{P}\left(X^{(\gamma),(q,p)}_{\gamma}\in \mathrm{B}(q_0,r)\times\mathrm{B}(0,M),\tau^{(\gamma),(q,p)}_{\partial}>\gamma\right)>0. 
\end{equation} For $(q,p)\in\mathrm{B}(q_0,r)\times\mathrm{B}(0,M)$, 
\begin{align}
 &\mathbb{P}\left(X^{(\gamma),(q,p)}_{\gamma}\in \mathrm{B}(q_0,r)\times\mathrm{B}(0,M),\tau^{(\gamma),(q,p)}_{\partial}>\gamma\right)\nonumber\\
 &\geq\mathbb{P}\left(X^{(\gamma),(q,p)}_{\gamma}\in \mathrm{B}(q_0,r)\times\mathrm{B}(0,M),\tau^{(\gamma),(q,p)}_{\partial}>\gamma,\sup_{t\in[0,1]}\left\vert q^{(\gamma),(q,p)}_{\gamma t}-\overline{q}^{(\gamma),q}_t\right\vert\leq r/2\right).\label{minoration}
\end{align}  
By \emph{\eqref{borne 1}} and \emph{\eqref{borne 2}} in Lemma~\ref{borne L1}, there exists $C_1>0$, depending on $M$, such that for all $\gamma>4$, 
\begin{equation}\label{markov 1}
    \sup_{(q,p)\in \mathrm{B}(q_0,r)\times\mathrm{B}(0,M)}\mathbb{E}\left[\sup_{t\in[0,1]}\left\vert q^{(\gamma),(q,p)}_{\gamma t}-\overline{q}^{(\gamma),q}_t\right\vert\right]\leq C_1\frac{1+\sqrt{\log(1+\gamma^2)}}{\gamma}. 
\end{equation} 
Moreover, by~\eqref{majoration} in Lemma~\ref{tension vitesse lemme}, 
\begin{align*}
    &\mathbb{P}\left(X^{(\gamma),(q,p)}_{\gamma}\in \mathrm{B}(q_0,r)\times\mathrm{B}(0,M),\tau^{(\gamma),(q,p)}_{\partial}>\gamma,\sup_{t\in[0,1]}\left\vert q^{(\gamma),(q,p)}_{\gamma t}-\overline{q}^{(\gamma),q}_t\right\vert\leq r/2\right)\\
    &\geq\mathbb{P}\left(\overline{q}^{(\gamma),q}_1\in \mathrm{B}(q_0,r/2),\overline{\tau}^{(\gamma),q}_0>1,\sup_{t\in[0,1]}\left\vert q^{(\gamma),(q,p)}_{\gamma t}-\overline{q}^{(\gamma),q}_t\right\vert\leq r/2\right)-\epsilon, 
\end{align*}
by definition of $\overline{\tau}^{(\gamma),q}_0$ and since $\mathrm{B}(q_0,2r)\subset\mathcal{O}$. Using~\eqref{markov 1} and the Markov inequality, it follows that for all $(q,p)\in \mathrm{B}(q_0,r)\times\mathrm{B}(0,M)$, 
\begin{align*}
    &\mathbb{P}\left(\overline{q}^{(\gamma),q}_1\in \mathrm{B}(q_0,r/2),\overline{\tau}^{(\gamma),q}_0>1,\sup_{t\in[0,1]}\left\vert q^{(\gamma),(q,p)}_{\gamma t}-\overline{q}^{(\gamma),q}_t\right\vert\leq r/2\right)\\
    &\geq \mathbb{P}\left(\overline{q}^{(\gamma),q}_1\in \mathrm{B}(q_0,r/2),\overline{\tau}^{(\gamma),q}_0>1\right) - \frac{2 C_1}{\gamma r}(1+\sqrt{\log(1+\gamma^2)})\\
    &\geq a-\frac{2 C_1}{\gamma r}(1+\sqrt{\log(1+\gamma^2)}).  
\end{align*}
As a result, by~\eqref{minoration} and by definition of $a$ and $\epsilon$, for all $\gamma>4$,
\begin{align*}
    &\inf_{(q,p)\in\mathrm{B}(q_0,r)\times\mathrm{B}(0,M)}\mathbb{P}\left(X^{(\gamma),(q,p)}_{\gamma}\in \mathrm{B}(q_0,r)\times\mathrm{B}(0,M),\tau^{(\gamma),(q,p)}_{\partial}>\gamma\right)\\
    &\geq a-\frac{2 C_1}{\gamma r}(1+\sqrt{\log(1+\gamma^2)})-\frac{a}{4}.
\end{align*}
Hence, there exists $\gamma_1>4$ such that for all $\gamma\geq\gamma_1$,
$$\inf_{(q,p)\in\mathrm{B}(q_0,r)\times\mathrm{B}(0,M)}\mathbb{P}\left(X^{(\gamma),(q,p)}_{\gamma}\in \mathrm{B}(q_0,r)\times\mathrm{B}(0,M),\tau^{(\gamma),(q,p)}_{\partial}>\gamma\right)\geq\frac{a}{2}.$$

\noindent \textbf{Step 2:} Now let us prove~\eqref{borne lambda}. By~\eqref{minoration 2}, for all $\gamma\geq\gamma_1$,
\begin{align*}
    &\mathrm{e}^{\lambda_0^{(\gamma)}\gamma} \iint_{\mathrm{B}(q_0,r)\times\mathrm{B}(0,M)}\psi^{(\gamma)}(q,p) \mathbb{P}(X^{(\gamma),(q,p)}_{\gamma}\in \mathrm{B}(q_0,r)\times\mathrm{B}(0,M),\tau^{(\gamma),(q,p)}_{\partial}>\gamma) \mathrm{d}q \mathrm{d}p\\
    &\geq c \mathrm{e}^{\lambda_0^{(\gamma)}\gamma}\iint_{\mathrm{B}(q_0,r)\times\mathrm{B}(0,M)}\psi^{(\gamma)}(q,p) \mathrm{d}q \mathrm{d}p .
\end{align*}
Since $\psi^{(\gamma)}$ is the density of the QSD of the Langevin process $(X^{(\gamma)}_t)_{t\geq0}$ then 
\begin{align*}
    &\mathrm{e}^{\lambda_0^{(\gamma)}\gamma} \iint_{\mathrm{B}(q_0,r)\times\mathrm{B}(0,M)}\psi^{(\gamma)}(q,p) \mathbb{P}(X^{(\gamma),(q,p)}_{\gamma}\in \mathrm{B}(q_0,r)\times\mathrm{B}(0,M),\tau^{(\gamma),(q,p)}_{\partial}>\gamma) \mathrm{d}q \mathrm{d}p\\
    &\leq\mathrm{e}^{\lambda_0^{(\gamma)}\gamma} \iint_D\psi^{(\gamma)}(q,p) \mathbb{P}(X^{(\gamma),(q,p)}_{\gamma}\in \mathrm{B}(q_0,r)\times\mathrm{B}(0,M),\tau^{(\gamma),(q,p)}_{\partial}>\gamma) \mathrm{d}q \mathrm{d}p\\
    &=\iint_{\mathrm{B}(q_0,r)\times\mathrm{B}(0,M)}\psi^{(\gamma)}(q,p) \mathrm{d}q \mathrm{d}p .
\end{align*}
Consequently, for $\gamma\geq\gamma_1$,
$$c \mathrm{e}^{\lambda_0^{(\gamma)}\gamma}\iint_{\mathrm{B}(q_0,r)\times\mathrm{B}(0,M)}\psi^{(\gamma)}(q,p) \mathrm{d}q \mathrm{d}p\leq\iint_{\mathrm{B}(q_0,r)\times\mathrm{B}(0,M)}\psi^{(\gamma)}(q,p) \mathrm{d}q \mathrm{d}p$$
which concludes the proof since $\iint_{\mathrm{B}(q_0,r)\times\mathrm{B}(0,M)}\psi^{(\gamma)}(q,p) \mathrm{d}q \mathrm{d}p>0$.
\end{proof}

\ACKNO{Mouad Ramil was supported by the Région Ile-de- France through a Ph.D. fellowship of the
Domaine d’Intérêt Majeur (DIM) Math Innov. This work also benefited from the support of the project ANR QuAMProcs (ANR-19-CE40-0010) from the French National Research Agency. The author would also like to thank Tony Lelièvre and Julien Reygner for fruitfull discussions throughout this work.} 

\begin{thebibliography}{10}

\bibitem{V3}
N.~Champagnat, K.~A. Coulibaly-Pasquier, and D.~Villemonais.
\newblock Criteria for exponential convergence to quasi-stationary
  distributions and applications to multi-dimensional diffusions.
\newblock In {\em S\'{e}minaire de {P}robabilit\'{e}s {XLIX}}, volume 2215 of
  {\em Lecture Notes in Math.}, pages 165--182. Springer, Cham, 2018.

\bibitem{V}
N.~{Champagnat} and D.~{Villemonais}.
\newblock {General criteria for the study of quasi-stationarity}.
\newblock {\em arXiv e-prints}, page arXiv:1712.08092, Dec 2017.

\bibitem{Collet}
P.~Collet, S.~Mart\'{\i}nez, and J.~San~Mart\'{\i}n.
\newblock {\em Quasi-stationary distributions}.
\newblock Probability and its Applications (New York). Springer, Heidelberg,
  2013.
\newblock Markov chains, diffusions and dynamical systems.

\bibitem{Freidlin}
M.~Freidlin.
\newblock Some remarks on the {S}moluchowski--{K}ramers approximation.
\newblock {\em Journal of Statistical Physics}, 117(3-4):617--634, 2004.

\bibitem{F}
A.~Friedman.
\newblock {\em Stochastic differential equations and applications. {V}ol. 1}.
\newblock Academic Press, New York-London, 1975.
\newblock Probability and Mathematical Statistics, Vol. 28.

\bibitem{FF}
A.~Friedman.
\newblock {\em Stochastic differential equations and applications. {V}ol. 2}.
\newblock Academic Press, New York-London, 1976.
\newblock Probability and Mathematical Statistics, Vol. 28.

\bibitem{GQZ}
G.~L. Gong, M.~P. Qian, and Z.~X. Zhao.
\newblock Killed diffusions and their conditioning.
\newblock {\em Probab. Theory Related Fields}, 80:151--167, 1988.

\bibitem{IneqOU}
S.~E. Graversen and G.~Peskir.
\newblock Maximal inequalities for the {O}rnstein-{U}hlenbeck process.
\newblock {\em Proc. Amer. Math. Soc.}, 128(10):3035--3041, 2000.

\bibitem{GuiNectoux}
A.~Guillin, B.~Nectoux, and L.~Wu.
\newblock {Quasi-stationary distribution for strongly Feller Markov processes
  by Lyapunov functions and applications to hypoelliptic Hamiltonian systems}.
\newblock {\em https://hal.archives-ouvertes.fr/hal-03068461/}, 2020.

\bibitem{Karatzas}
I.~Karatzas and S.~E. Shreve.
\newblock Brownian motion.
\newblock In {\em Brownian Motion and Stochastic Calculus}, pages 47--127.
  Springer, 1998.

\bibitem{KnobPart}
R.~Knobloch and L.~Partzsch.
\newblock Uniform conditional ergodicity and intrinsic ultracontractivity.
\newblock {\em Potential Anal.}, 33(2):107--136, 2010.

\bibitem{Kramers}
H.~A. Kramers.
\newblock Brownian motion in a field of force and the diffusion model of
  chemical reactions.
\newblock {\em Physica}, 7(4):284--304, 1940.

\bibitem{LebLelPer}
C.~Le~Bris, T.~Leli\`evre, M.~Luskin, and D.~Perez.
\newblock A mathematical formalization of the parallel replica dynamics.
\newblock {\em Monte Carlo Methods Appl.}, 18(2):119--146, 2012.

\bibitem{LelRouSto10}
T.~Leli\`evre, M.~Rousset, and G.~Stoltz.
\newblock {\em Free energy computations}.
\newblock Imperial College Press, London, 2010.
\newblock A mathematical perspective.
\bibitem{MonRam}
P.~Monmarche, and M.~Ramil. 
\newblock {\em Overdamped limit at stationarity for non-equilibrium {L}angevin diffusions}.
\newblock Electronic Communications in Probability, 27:1--8, 2022. 
\bibitem{LelSto16}
T.~Leli\`evre and G.~Stoltz.
\newblock Partial differential equations and stochastic methods in molecular
  dynamics.
\newblock {\em Acta Numer.}, 25:681--880, 2016.

\bibitem{LelRamRey}
T.~Lelièvre, M.~Ramil, and J.~Reygner.
\newblock {A probabilistic study of the kinetic {F}okker-{P}lanck equation in
  cylindrical domains}.
\newblock {\em arXiv e-prints}, page arXiv:2010.10157, Jan 2021.

\bibitem{LelRamRey2}
T.~Lelièvre, M.~Ramil, and J.~Reygner.
\newblock {Quasi-stationary distribution for the {L}angevin process in
  cylindrical domains, part {I}: Existence, uniqueness and long time
  convergence}.
\newblock {\em arXiv e-prints}, page arXiv:2101.11999, Jan 2021.

\bibitem{VilMel}
S.~M\'{e}l\'{e}ard and D.~Villemonais.
\newblock Quasi-stationary distributions and population processes.
\newblock {\em Probab. Surv.}, 9:340--410, 2012.

\bibitem{RamPHD}
M.~Ramil.
\newblock {\em Processus cinétiques dans des domaines à bord et
  quasi-stationnarité}.
\newblock PhD thesis, Ecole des Ponts ParisTech, 2020.

\end{thebibliography}
\end{document}